\def\R{{\mathbb R}}
\def\E{{\mathbb E}}
\def\U{{\mathcal U}}
\def\V{{\mathcal V}}
\def\W{{\mathcal W}}
\def\A{{\mathcal A}}
\def\l{{\ell}}
\def\dto{{\overset{d}{\longrightarrow }}}
\DeclareMathOperator{\supp}{supp}
\DeclareMathOperator{\wt}{wt}
\long\def\symbolfootnote[#1]#2{\begingroup%
\def\thefootnote{\fnsymbol{footnote}}\footnote[#1]{#2}\endgroup} 
\newtheorem{thm}{Theorem}[section]
\newtheorem{lem}[thm]{Lemma}
\theoremstyle{definition}
\newtheorem{defn}[thm]{Definition}
\theoremstyle{remark}
\title{Central limit theorem for moments of spectral measures of Wigner matrices}
\author{Trinh Khanh Duy}
\begin{document}
\maketitle
\begin{abstract}
Spectral measures of Wigner matrices are investigated. The Wigner semicircle law for spectral measures is proved. Regard this as the law of large number, the central limit theorem for moments spectral measure is also derived. The proof is based on moment method and combinatorial method.  
\end{abstract}

\symbolfootnote[0]{{2000 Mathematics Subject Classification }. Primary 60F05; secondary 15A52}

\section{Introduction}
This paper concerns with real Wigner matrices $X_N$ of the form
\[
	X_N(j,i) = X_N(i,j) := \frac{\xi_{ij}}{\sqrt{N}},\quad 1\le i \le j \le N.
\]
Here $\{\xi_{ii}\}_{1\le i}$ and $\{\xi_{ij}\}_{1\le i < j}$ are two i.i.d.\ (independent identically distributed) sequences of mean zero (real) random variables. We require in addition that all moments of $\xi_{11}$ and $\xi_{12}$ are finite and $\xi_{12}$ has unit variance, that is, $\E[|\xi_{12}|^2] = 1$.

Let $\lambda_1^{(N)} \le \lambda_2^{(N)} \le \cdots \le \lambda_N^{(N)}$ be the eigenvalues of $X_N$ and  
\[
	L_N := \frac 1N \sum_{i = 1}^N \delta_{\lambda_i^{(N)}}
\]
be the empirical distribution (measure) of $X_N$, where $\delta$ denotes the Dirac measure. Then the Wigner semicircle law claims that as $N$ tends to infinity, $L_N$ converges weakly, in probability, to the semicircle distribution. This means that for any bounded continuous function $f\colon \R \to \R$, $\langle L_N, f \rangle $ converges in probability to $\langle \sigma, f \rangle$. Here the semicircle distribution, denoted by $\sigma$, is the probability distribution supported on $[-2,2]$ with density
\[
	\sigma(x) = \frac{1}{2\pi} \sqrt{4 - x^2}, (-2 \le x \le 2).
\] 

There are many proofs of the Wigner semicircle law. Let us mention here Wigner's original one which based on combinatoric arguments. Since the semicircle distribution $\sigma$ has compact support, in order to prove the Wigner semicircle law, it is sufficient to show that all moments of $L_N$ converges in probability to the corresponding moments of $\sigma$, namely, for $k = 0,1,2,\dots,$
\begin{equation}\label{LLN-empirical}
	\langle L_N, x^k\rangle \to \langle \sigma, x^k\rangle \text{ in probability as $N \to \infty$. }
\end{equation} 
The $k$th moment of $L_N$ can be written as 
\[
	\langle L_N, x^k \rangle = \frac{1}{N} \sum_{j = 1}^N (\lambda_j^{(N)})^k = \frac{1}{N} \sum_{j = 1}^N X_N^k(j,j).
\]
Thus, in some respects, the semicircle law states that the average of the diagonal elements of $X_N^k$ converges in probability to $\langle \sigma, x^k \rangle$.

With a little modification, one can show that each diagonal element of $X_N^k$ does converge to $\langle \sigma, x^k \rangle$ as $N$ tends to infinity. In particular, for $k = 0,1,2,\dots,$
\begin{equation}\label{LLN-spectral}
	X_N^k(1,1) \to \langle \sigma, x^k \rangle \text{ in probability as $N \to \infty$}.
\end{equation}
On the other hand, there is a probability measure $\nu_N$ on $\R$ satisfying 
\[
	\langle \nu_N, x^k \rangle = X_N^{k}(1,1),\quad  k = 0,1,2,\dots,
\] 
called the spectral measure of $(X_N, e_1)$, where $e_1 = (1,0,\dots,0)^T \in \R^N$. It then follows that the spectral measure $\nu_N$ also converges weakly, in probability, to the semicircle distribution because of the compact support of the semicircle distribution.

Regard the convergence in probability of moments as the law of large numbers, the central limit theorem for moments of the empirical distributions $L_N$ has been derived. It is known that scaled by $N$,
\[
	N\left (\langle L_N, x^k \rangle - \E[\langle L_N, x^k \rangle ]\right) 
\]  
converges weakly to the Gaussian distribution whose variance depends on the second and fourth moments of $\xi_{11}$ and $\xi_{12}$. This and the multidimensional version were studied in \cite{Anderson2006}. The main purpose of this paper is to investigate the central limit theorem for moments of the spectral measures $\nu_N$, or just the central limit theorem for diagonal elements $X_N^k(1,1)$. The main result is as follows.

\begin{thm} Let 
\[
	\bar S_{N,k} = \sqrt{N} \left( \langle \nu_N, x^k\rangle - \E[\langle \nu_N, x^k\rangle]\right) = \sqrt{N}\left( X_{N}^k(1,1) - \E[X_{N}^k(1,1)]\right).
\]
Then there exists a sequence of jointly Gaussian random variables $\{\eta_k\}_{k = 2,3,\dots}$ independent of  $\zeta$ which has the same distribution as $\xi_{11}$ such that the following hold.
\begin{itemize}
	\item[\rm (i)] For even $k$, 
		\[
			\bar S_{N,k}  \dto \eta_k \text{ as } N\to \infty.
		\]
	\item[\rm (ii)] For odd $k \ge 3$,
		\[
			\bar S_{N,k}  \dto a_k \zeta + \eta_k \text{ as } N\to \infty,
		\]
		where $a_k$ is a constant.
	\item[\rm (iii)] For fixed $K$, the joint distribution of $(\bar S_{N, 1}, \bar S_{N, 2}, \dots,\bar S_{N,K})$ converges to that of $(\zeta, \eta_2, a_3 \zeta + \eta_3, \dots)$. 
	
\end{itemize}
Here the symbol ``$\dto$'' is used to denote the weak convergence of random variables.
\end{thm}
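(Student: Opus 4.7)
The plan is to prove the theorem by the method of moments: show that all joint moments
$\E[\bar S_{N,k_1} \cdots \bar S_{N,k_m}]$
converge to the corresponding moments of the claimed limit. Because the limit is a jointly Gaussian family up to a shift by $a_k\zeta$ on odd coordinates, moment convergence identifies its law uniquely. The central tool is the walk expansion
\[
X_N^k(1,1) \;=\; N^{-k/2} \sum_{w\colon 1\to 1,\; |w|=k}\; \prod_{e}\xi_e^{m_e(w)},
\]
where the sum runs over closed walks of length $k$ starting and ending at vertex $1$ and $m_e(w)$ is the multiplicity of edge $e$ in $w$. The leading contribution to $\E[X_N^k(1,1)]$ comes from Dyck-type walks on rooted plane trees with $k/2$ edges and gives the semicircle moments already invoked in~\eqref{LLN-spectral}.

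For the variance I would expand
\[
N\cdot \mathrm{Var}\bigl(X_N^k(1,1)\bigr)\;=\;N^{1-k}\sum_{w,w'}\bigl(\E[\xi_w\xi_{w'}]-\E[\xi_w]\,\E[\xi_{w'}]\bigr),
\]
and observe that only pairs whose combined edge-multigraph is connected (so that $w$ and $w'$ share at least one edge) contribute, the dominant pairs being those whose union has the maximum number of vertices admissible with every edge occurring at least twice. The $\zeta$-part for odd $k$ is then visible: since $\sqrt{N}\,X_N(1,1)=\xi_{11}$, already $\bar S_{N,1}=\xi_{11}$ has the distribution of $\zeta$, and for odd $k=2\ell+1\ge 3$ the leading fluctuation comes from walks that use the self-loop $(1,1)$ exactly once and otherwise traverse a rooted plane tree with $\ell$ edges, each edge traversed exactly twice; after multiplication by $\sqrt{N}$ the sum of such walks equals $a_k\zeta+o_{L^2}(1)$ with $a_k$ an explicit tree count. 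All remaining centered contributions involve the $(1,1)$-loop either $0$ or $\ge 2$ times, so they are polynomial in the off-diagonal entries $\{\xi_{ij}\}_{i<j}$, which are independent of $\xi_{11}$; these pieces assemble into $\eta_k$.

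To upgrade variance convergence to joint Gaussian convergence of
$\bigl(\bar S_{N,k}-\mathbf{1}_{\{k\text{ odd}\}}\,a_k\zeta\bigr)_{2\le k\le K}$,
I would compute joint cumulants of order $r\ge 3$ and show they vanish. Under the walk expansion this reduces to checking that only \emph{connected} $r$-tuples of walks contribute (the usual cumulants-vs-connectedness correspondence) and that connected $r$-tuples have too few vertices in their joint edge-multigraph to survive the $N^{r/2-rk/2}$ scaling---the same mechanism used in the empirical-measure CLT of~\cite{Anderson2006}, now adapted to walks pinned at vertex~$1$. Statement (iii) of the theorem then follows by applying the expansion simultaneously to walks of several lengths.

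The main obstacle is the combinatorial bookkeeping. One needs a clean classification of the walk-tuples that survive the limit, an explicit formula for $\mathrm{Cov}(\eta_k,\eta_{k'})$ in terms of two-tree gluings along a shared edge---with subcases depending on whether the shared edge is incident to vertex~$1$ and on whether self-loops $\xi_{ii}$ with $i\ne 1$ appear in either walk---and a verification that, after extracting $a_k\zeta$, the residue is independent of $\zeta$. None of these steps is individually deep, but the case analysis separating the diagonal-entry contribution from the off-diagonal contribution is the most delicate part and carries most of the work.
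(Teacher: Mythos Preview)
Your overall strategy---walk expansion, extraction of the $\xi_{11}$-contribution for odd $k$, then joint Gaussianity of the remainder by a moment/cumulant argument---is the same as the paper's. But your decomposition and the ensuing independence claim contain a genuine error.

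You split walks according to the multiplicity of the self-loop $(1,1)$: exactly once versus ``either $0$ or $\ge 2$ times'', and then assert that the latter ``are polynomial in the off-diagonal entries $\{\xi_{ij}\}_{i<j}$, which are independent of $\xi_{11}$''. This is false for the $\ge 2$ case: a walk traversing $(1,1)$ twice or more carries a factor $\xi_{11}^{\ge 2}$, which is certainly not a function of the off-diagonal entries and not independent of $\xi_{11}$. So your residue is not independent of $\zeta$ as written, and the argument for (ii) and (iii) does not close.

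The paper fixes this with a different split: it separates words with \emph{no self-edge at all} (the set $\U$) from words with at least one self-edge (the set $\V$), writing $\bar S_{N,k}=Y_{N,k}+Z_{N,k}$. Then $Y_{N,k}$ is genuinely a function of the off-diagonal entries only, hence independent of $\xi_{11}$ by construction. On the other side, the paper shows that the \emph{entire} $\V$-contribution satisfies $Z_{N,k}-a_k\xi_{11}\to 0$ in $L^2$ (Lemmas~\ref{lem:CLT-U-pair}--\ref{lem:limit-of-Zk}); walks with $(1,1)$-multiplicity $\ge 2$, and walks with self-loops $\xi_{ii}$ at $i\neq 1$, are all absorbed into this $o_{L^2}(1)$ term rather than into the Gaussian piece. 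This buys two things: the independence of $\{\eta_k\}$ from $\zeta$ is automatic, and the combinatorics for the Gaussian limit runs entirely on self-loop-free words, so the CLT-sentence analysis (Lemmas~\ref{lemma:CLTword-1}--\ref{lemma:CLTword-2}) is clean. Your route could be salvaged by first proving that walks with $(1,1)$-multiplicity $\ge 2$ are negligible in $L^2$ and then carrying the loops $\xi_{ii}$, $i\neq 1$, through the Gaussian analysis, but that is more bookkeeping than the paper's decomposition, not less.

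A secondary difference: you propose to show vanishing of cumulants of order $\ge 3$, whereas the paper computes joint moments directly and identifies the Wick formula via a perfect-matching structure on CLT sentences (Lemma~\ref{lemma:CLTword-2}). The two devices are equivalent, but the paper's argument is organized around moments rather than cumulants.
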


The moment method is used to prove the central limit theorem. However, to compare with combinatoric arguments in \cite{Anderson2006}, the big difference is that every word starts at $1$, as we will see in the next section. To overcome this difficulty, we refine method in \cite{Anderson2006} using some idea from \cite{Sinai1998}. The paper is organized as follows. Section~2 deals with some combinatorics objects such as Wigner words, CLT sentences and key combinatoric arguments. We prove in Section~3 the Wigner semicircle law for spectral measures and investigate the central limit theorem in Section~4.

\section{Words, sentences}
This section deals with basic notions and key combinatoric arguments needed in the paper.

	We begin with the definition of words. A word $w=\{s_1, s_2,\dots, s_k\}$ is a finite sequence of positive integer numbers called letters. A word is closed if the first and the last letters are the same. The length of $w$ is denoted by $\l (w) := k$. The support, denoted by $\supp(w)$, is the set of letters appearing in $w$, and the weight, $\wt(w)$, is defined as the cardinality of $\supp(w)$. If we restrict the condition that $s_1, s_2,\dots, s_k \in \{1,2,\dots, N\}$, we call $w$ an $N$-word, where $N$ is a positive integer number. 
		
	Two words $w_1$ and $w_2$ are called equivalent, denoted by $w_1 \sim w_2$, if there is a bijection from $\supp(w_1)$ onto $\supp(w_2)$, which maps $w_1$ to $w_2$.
	
	A word $w$ is associated with an undirected graph $G_w = (V_w, E_w)$, with $\wt(w)$ vertices $V_w = \supp(w)$ and $(k-1)$ edges $E_w = \{(s_i, s_{i+1}), i = 1,2,\dots, k-1\}$. Then the word $w$ defines a path/walk on the connected graph $G_w$. We define the set of self edges as $E_w^s = \{e \in E_w : e = (u,u), u \in V_w\}$ and the set of connecting edges as $E_w^c = E_w \setminus E_w^s$. For $e\in E_w$, we use $N_e^w$ to denote the number of times this path traverses the edge $e$ (in any direction). Note that equivalent words generate the same graphs (up to graph isomorphism) $G_w$ and the same passage counts $N_e^w$.

A sentence $a = (w_1,w_2, \dots, w_n)$ is a finite sequence of words of at least one word long. The support of $a$ is defined as $\supp(a) = \cup_{i = 1}^n \supp (w_i)$, and the weight of $a$, $\wt(a)$, is just the cardinality of $\supp(a)$. Two sentences $a_1$ and $a_2$ are called equivalent, denoted by $a_1 \sim a_2$, if there is a bijection from $\supp(a_1)$ onto $\supp(a_2)$, which maps $a_1$ to $a_2$. 

A graph $G_a = (V_a, E_a)$ associated with a sentence $a=(w_1,w_2, \dots, w_n)$, where  $w_i = (s_1^i, s_2^i, \dots, s_{\l(w_i))}^i), i = 1,2,\dots,n$, is the graph with vertices $V_a = \supp(a)$ and undirected edges
\[
	E_a = \{(s_j^i, s_{j+1}^i) : j = 1,\dots, \l(w_i) -1, i = 1,2, \dots, n	\}.
\]	
We define the set of self edges as $E_a^s = \{e \in E_a : e=\{u,u\}, u \in V_a\}$ and the set of connecting edges as $E_a^c = E_a \setminus E_a^s$.

In words, the graph associated with a sentence is obtained by piecing together the graphs of the individual words. Thus, the graph of a sentence may be disconnected. Note that the sentence $a$ defines $n$ paths in the graph $G_a$. For $e \in E_a$, we use $N_e^a$ to denote the number of times the union of these paths traverses the edge $e$ (in any direction). We note that equivalent sentences generate the same graphs $G_a$ and the same passage counts $N_e^a$.

The paper deals with closed words starting at $1$. Let $\W^{(N)}$ be the set of all $N$-words starting at $1$. Let $\U^{(N)}:= \{w \in \W^{(N)}: E_w^s = \emptyset\}$ be the subset of $\W^{(N)}$ consisting of words with no self-edge, and $\V^{(N)} := \W^{(N)} \setminus \U^{(N)}$. Set
\[
	\W := \bigcup_{N = 1}^\infty \W^{(N)},\quad  \U := \bigcup_{N = 1}^\infty \U^{(N)},\quad \V := \bigcup_{N = 1}^\infty \V^{(N)}.
\]
Henceforth, the sets $\W_k, \W_k^{(N)},\U_k, \U_k^{(N)},\V_k, \V_k^{(N)}$ with a subscript $k$, are used to denote the corresponding subsets consisting of words of length $k+1$.

A closed word $w$ is called a weak Wigner word if $w$ visits each edge of $G_w$ at least twice. Assume that $w$ is a weak Wigner word. Since the graph $G_w = (V_w, E_w)$ of $w$ is connected and each edge is visited at least twice, it follows that 
\[
	\wt(w) = \# V_w  \le 1 + \# E_w \le 1 + \frac{\l(w) - 1}{2} = \frac{\l(w) + 1}{2}.
\]
A weak Wigner word $w$ of weight $\wt(w) = (\l(w)+1)/2$ is called a Wigner word. We also call a single letter word a Wigner word. Note that $w$ is a Wigner word only if its length is an odd number.

Here are some properties of a Wigner word $w$ (see \cite{Anderson2006} or \cite[Section~2.1]{Anderson2010} for more details):
\begin{itemize}
	\item[(i)] the graph $G_w$ is a tree, that is, a connected graph with no loop; 
	\item[(ii)] the set of self edges $E_w^s$ is empty;
	\item[(iii)] the path $w$ visits each connecting edge exactly twice, $N_e^w = 2$ for all $e \in E_w$. 
\end{itemize}

A pair of words $(w_1, w_2)$ is called a weak CLT pair if 
\begin{itemize}
	\item[(P1)] $N_e^a \ge 2$, for all $e \in E_a$, where $a = (w_1, w_2)$;
	\item[(P2)] $E_{w_1} \cap E_{w_2} \neq \emptyset$.
\end{itemize}

To study properties of weak CLT pairs, we need the following simple but useful property. It is a special case of the so called ``the parity principle'' (see \cite[Lemma~4.4]{Anderson2006}). 
\begin{lem}[Closed walk on a tree]
A closed walk on a tree visit each edge an even of times.
\end{lem}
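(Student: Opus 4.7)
Let $T = (V, E)$ denote the tree and let $w = (s_1, s_2, \dots, s_k)$, with $s_1 = s_k$, be a closed walk on it. The plan is to exploit the defining property of a tree — removing any single edge disconnects it — to convert the claim into an elementary parity statement about the walk's component membership.

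First, I would fix an arbitrary edge $e = (u, v) \in E$ and observe that $T \setminus \{e\}$ has exactly two connected components, $T_u \ni u$ and $T_v \ni v$. For any other edge $(x, y) \in E \setminus \{e\}$, both endpoints lie in the same component of $T \setminus \{e\}$, because that edge is preserved under the deletion; the endpoints of $e$ itself, on the other hand, lie in different components. This is the only place where the tree hypothesis is used, but it is used essentially: if $T$ contained a cycle through $e$, the deletion would not disconnect its endpoints.

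Next, I would assign to each vertex of the walk an indicator $\epsilon_j \in \{0, 1\}$ with $\epsilon_j = 0$ if $s_j \in T_u$ and $\epsilon_j = 1$ if $s_j \in T_v$. By the previous observation, $\epsilon_j \neq \epsilon_{j+1}$ precisely when the $j$th step of $w$ traverses $e$; every other step preserves the component and hence the indicator. Since $s_1 = s_k$, we have $\epsilon_1 = \epsilon_k$, so the sequence $\epsilon_1, \dots, \epsilon_k$ must contain an even number of flips. Consequently $N_e^w$, the number of traversals of $e$ by the walk, is even. As $e$ was arbitrary, the conclusion holds for every edge.

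There is no substantive obstacle: once the minimal connectivity of the tree is invoked, the argument reduces to counting sign-changes in a periodic $\{0,1\}$-sequence. No induction on the length of the walk or on the size of the tree, and no heavier combinatorics, is needed.
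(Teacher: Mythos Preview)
Your argument is correct and complete. The bipartition-by-edge-removal idea is the natural way to isolate a single edge's traversal count, and the parity of the indicator sequence $(\epsilon_j)$ does the rest; nothing is missing.

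Note, however, that the paper does not actually give its own proof of this lemma: it merely states the result and refers the reader to \cite[Lemma~4.4]{Anderson2006}, where the more general ``parity principle'' is established. So there is no paper proof to compare against in detail. Your self-contained component-counting argument is essentially the standard proof of that principle specialized to trees, and in fact supplies what the paper outsources. If anything, your write-up is an upgrade: it makes the paper's appeal to the parity principle unnecessary for this particular application.
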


\begin{lem}\label{lem:weak-CLT-pair}
Let $a=(w_1, w_2)$ be a weak CLT pair. Then 
\[
	\wt(a) \le  \frac{\l(w_1) + \l(w_2)}{2} -1.
\]
\end{lem}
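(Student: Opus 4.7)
The key idea is to bound $\#E_a$ using the total number of edge-traversals, then relate $\#V_a$ to $\#E_a$ via the graph structure, using the parity principle from the preceding lemma to handle the tight (tree) case.

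First I would observe that each word $w_i$ of length $\ell(w_i)$ contributes $\ell(w_i)-1$ edge-traversals, so
\[
	\sum_{e \in E_a} N_e^a = \ell(w_1) + \ell(w_2) - 2.
\]
Combined with (P1), this immediately gives the crude bound
\[
	\#E_a \le \frac{\ell(w_1)+\ell(w_2)}{2} - 1.
\]
Next, (P2) implies that $G_{w_1}$ and $G_{w_2}$ share a vertex (an endpoint of any common edge), and since each $G_{w_i}$ is connected, $G_a$ itself is connected.

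Then I would split on whether $G_a$ contains a cycle. If $G_a$ is \emph{not} a tree, the connectedness forces $\#V_a \le \#E_a$, and the crude bound on $\#E_a$ already yields the claimed inequality.

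The delicate case is when $G_a$ is a tree, since then $\#V_a = \#E_a + 1$ and the crude bound only gives $\wt(a) \le (\ell(w_1)+\ell(w_2))/2$, which is off by $1$. Here I would exploit the parity principle: each $G_{w_i}$ is a subtree of $G_a$, and $w_i$ is a closed walk on that tree, so $N_e^{w_i}$ is even for every $e \in E_{w_i}$. In particular $N_e^{w_i} \ge 2$ whenever $e \in E_{w_i}$. Choosing an edge $e_0 \in E_{w_1} \cap E_{w_2}$ (which exists by (P2)) gives $N_{e_0}^a = N_{e_0}^{w_1} + N_{e_0}^{w_2} \ge 4$, while $N_e^a \ge 2$ for the remaining $\#E_a - 1$ edges; hence
\[
	\ell(w_1)+\ell(w_2)-2 = \sum_{e \in E_a} N_e^a \ge 2\#E_a + 2,
\]
which sharpens the bound to $\#E_a \le (\ell(w_1)+\ell(w_2))/2 - 2$, and $\wt(a)=\#E_a+1$ closes the tree case.

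The only conceptual obstacle is spotting that (P2) is not merely a connectivity assumption but is precisely the input that lets the parity principle save an extra edge in the otherwise-tight tree scenario; once the case split is in place, everything reduces to double-counting edge-traversals.
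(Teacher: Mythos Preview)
Your proof is correct and follows essentially the same route as the paper: bound $\#E_a$ by half the total number of edge-traversals, use connectedness of $G_a$, and then in the tree case invoke the parity principle so that the common edge guaranteed by (P2) is traversed at least four times, which recovers the missing $1$. The only cosmetic difference is that the paper phrases the case split as $\wt(a)\le\#E_a$ versus $\wt(a)=1+\#E_a$ rather than ``contains a cycle'' versus ``is a tree'', but for a connected graph these are equivalent.
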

\begin{proof}
Let $G_a = (V_a, E_a)$ be the graph of the sentence $a$. Since the pair $(w_1,w_2)$ visits each edge at least twice, it follows that 
\[
	\# E_a \le \frac{\l(w_1) -1 + \l(w_2) - 1}{2}.
\]
In addition, $\wt(a)\le 1+ \# E_a$ because the graph $G_a$ is connected.

Now, if $\wt(a) \le \# E_a$, then the conclusion immediately follows. Thus, we only need to consider the case $\wt(a) = 1+ \# E_a$, in which $G_a$ is a tree. Since $w_1, w_2$ are closed walks on the tree $G_a$, each word $w_1, w_2$ visits any edge $e \in E_a$ an even of times. Consequently, a common edge of $w_1$ and $w_2$ is visited at least four times. Therefore, 
\[
	\# E_a \le \frac{\l(w_1) -1 + \l(w_2) - 1}{2} - 1,
\]
and hence the conclusion follows.
\end{proof}

A pair $(w_1, w_2)$ is called a CLT pair if it is a weak CLT pair and in addition, 
\[
	\wt((w_1, w_2)) = \frac{\l(w_1) + \l(w_2)}{2} - 1.
\]

Denote by $\U_{k_1, k_2}$ a set of representatives for equivalence classes of CLT pairs $(u_1, u_2)$, where $u_1$ and $u_2$ are $(k_1 + k_2)/2 $-words of length $k_1$ and $k_2$, respectively, provided that $k_1 + k_2$ is even. When $k_1 + k_2$ is odd, we set  $\U_{k_1, k_2}= \emptyset$.

The following lemma introduces some properties of CLT pairs. We omit an easy proof.
\begin{lem}\label{lem:CLT-pair}
Let $a =(u_1, u_2) \in \U_{k_1, k_2}$ with $k_1, k_2 \ge 2$, and $k_1 + k_2$ being even. Then either $\wt(a) = 1 + \# E_a$ or $\wt(a) = \# E_a$. Moreover, the following hold.
\begin{itemize}
	\item[\rm(i)] If $\wt(a) = 1 + \# E_a$, then $G_a$ is a tree and 
		\begin{itemize}
			\item[\rm(a)] $N_e^{u_i} = 2$, for all $e \in E_{u_i}, i = 1,2$;
			\item[\rm(b)] $N_e^{a} = 2$, for all $e \in E_a$ except one edge $e_0$ with $N_{e_0}^a = 4$.
		\end{itemize}
	\item[\rm(ii)] If $\wt(a) = \# E_a$, then 
		\begin{itemize}
			\item[\rm(a)] $N_e^{u_i} = 1$, for some $e \in E_{a}, i = 1,2$;
			\item[\rm(b)] $N_e^{a} = 2$, for all $e\in E_a$.
		\end{itemize}
\end{itemize} 
\end{lem}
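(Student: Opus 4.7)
The plan is to argue purely from counting edge-traversals, using the connectedness of $G_a$ (guaranteed by (P2)) and the parity principle (Lemma preceding Lemma~\ref{lem:weak-CLT-pair}) when $G_a$ turns out to be a tree.

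First, I would set up the basic inequalities. The total number of edge-traversals of the pair $a$ is $(k_1-1)+(k_2-1) = k_1+k_2-2$, and each edge is visited at least twice by (P1), so $\#E_a \le (k_1+k_2)/2 -1$. On the other hand, $G_a$ is connected because $u_1,u_2$ share an edge and each $G_{u_i}$ is connected, hence $\wt(a) \le 1 + \#E_a$. Combined with the CLT-pair hypothesis $\wt(a) = (k_1+k_2)/2 - 1$, these three relations force $\wt(a) \in \{\#E_a,\, 1+\#E_a\}$, which is the dichotomy of the lemma.

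Next I would handle case (i), $\wt(a) = 1 + \#E_a$. Here $G_a$ is a connected graph with exactly $\wt(a)-1$ edges, so $G_a$ is a tree. The total traversal budget is $k_1+k_2-2 = 2\#E_a + 2$, so the multiset $\{N_e^a\}_{e\in E_a}$ has sum exactly $2$ more than the forced minimum $2\#E_a$. Because each $u_i$ is a closed walk on the tree $G_a$, the parity principle gives $N_e^{u_i}\in 2\Z_{\ge 0}$ for every $e,i$; in particular every $N_e^a = N_e^{u_1}+N_e^{u_2}$ is even. The only way to distribute an excess of $2$ among even numbers $\ge 2$ is to have exactly one edge $e_0$ with $N_{e_0}^a = 4$ and $N_e^a=2$ on every other edge, which is (b). For (a), any edge $e\ne e_0$ with $N_e^a=2$ and even $N_e^{u_i}$ must have $(N_e^{u_1},N_e^{u_2})\in\{(2,0),(0,2)\}$, so $e$ lies in exactly one of $E_{u_1},E_{u_2}$; hence the common edge guaranteed by (P2) must be $e_0$, forcing $N_{e_0}^{u_1}=N_{e_0}^{u_2}=2$. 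Thus $N_e^{u_i}=2$ for every $e\in E_{u_i}$.

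Finally, in case (ii), $\wt(a) = \#E_a$, the edge inequality $\#E_a \le (k_1+k_2)/2-1$ must be an equality, so the sum $\sum_e N_e^a = 2\#E_a$ combined with $N_e^a\ge 2$ pins $N_e^a = 2$ for every $e\in E_a$; this is (b). For (a), write each pair $(N_e^{u_1},N_e^{u_2})$ as a decomposition of $2$; the options are $(0,2),(2,0),(1,1)$. If $(1,1)$ never occurs, then $E_{u_1}\cap E_{u_2}=\emptyset$, contradicting (P2). Hence some edge $e$ has $N_e^{u_1}=N_e^{u_2}=1$, yielding (a).

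I do not expect a serious obstacle; the delicate point is making sure in case (i) that the excess of $2$ units of traversal cannot be split as two edges with $N_e^a = 3$ each, but this is immediately excluded by the tree parity principle, which is the only nontrivial ingredient invoked.
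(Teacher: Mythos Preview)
Your argument is correct. The paper itself omits the proof (``We omit an easy proof''), so there is nothing to compare against line by line, but your counting argument---bound $\#E_a$ from the traversal budget, bound $\wt(a)$ from connectedness, and then use the parity principle in the tree case to rule out odd excess---is exactly the kind of straightforward edge-count the author evidently had in mind. One minor remark: you justify connectedness of $G_a$ via (P2), which is fine, but in this paper's setting both $u_1$ and $u_2$ are closed words starting at $1$, so connectedness holds automatically through the shared vertex $1$ even without invoking the shared edge.
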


A sentence $a = (w_1,\dots,w_n)$ is called a weak CLT sentence if the following conditions hold
\begin{itemize}
	\item[(S1)] $N_e^a \ge 2,$ for all $e \in E_a$;
	\item[(S2)] for all $i$, there exists $j \neq i$ such that $E_{w_i} \cap E_{w_j} \neq \emptyset$.
\end{itemize}

\begin{lem}\label{lemma:CLTword-1}
Let $a = (w_1,\dots,w_n)$ be a weak CLT sentence. Then 
\[
	\wt (a) \le 1 + \sum_{i = 1}^n \frac{\l(w_i) - 2}{2}.
\] 
\end{lem}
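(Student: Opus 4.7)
I would proceed by induction on the number of words $n \ge 2$. The base case $n = 2$ is Lemma~\ref{lem:weak-CLT-pair}, since a two-word weak CLT sentence is exactly a weak CLT pair: (S1) and (S2) reduce to (P1) and (P2).

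For the inductive step ($n \ge 3$), I would introduce the \emph{edge-sharing graph} $H$ on $\{1,\dots,n\}$, with $i \sim_H j$ iff $E_{w_i} \cap E_{w_j} \neq \emptyset$. Condition (S2) is precisely the statement that $H$ has no isolated vertex. If $H$ is \emph{disconnected} with components $C_1, \dots, C_m$ ($m \ge 2$), each subsentence $a_\alpha := (w_i)_{i \in C_\alpha}$ is a weak CLT sentence on $|C_\alpha| < n$ words and is edge-disjoint from the others, so the inductive hypothesis applies to every $a_\alpha$. In the setting where this lemma is applied (diagonal entries of $X_N^k$), all words start at the same letter, so every $V_{a_\alpha}$ contains this common root, giving
\[
	\wt(a) \le 1 + \sum_{\alpha=1}^m \bigl(\wt(a_\alpha) - 1\bigr) \le 1 + \sum_{i=1}^n \frac{\ell(w_i) - 2}{2}.
\]

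If $H$ is \emph{connected}, I would pick any edge $\{i,j\} \in E(H)$ and \emph{merge} $w_i$ into $w_j$ at a common endpoint of a shared edge to produce a single closed walk $w_{ij}$ of length $\ell(w_i) + \ell(w_j) - 1$ with $N_e^{w_{ij}} = N_e^{w_i} + N_e^{w_j}$. The new sentence $a' = (w_{ij}) \cup (w_k)_{k \ne i, j}$ has $n-1$ words, the same graph $G_{a'}=G_a$ (so $\wt(a') = \wt(a)$), and the same total traversal $L := \sum_k(\ell(w_k)-1)$. (S1) is automatic, and (S2) for $w_{ij}$ follows from $H$ being connected with $n \ge 3$, which forces an $H$-neighbor of $w_i$ or $w_j$ distinct from these two. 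Applying the inductive hypothesis to $a'$ and unwinding $(\ell(w_{ij})-2)/2 = (\ell(w_i)-2)/2 + (\ell(w_j)-2)/2 + \tfrac{1}{2}$ yields the desired bound up to a $\tfrac{1}{2}$ slack.

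The main obstacle is this $\tfrac12$ shift introduced by merging. I would absorb it in two ways: when the target bound $1 + \sum(\ell(w_i)-2)/2$ happens to be an integer, the integrality of $\wt(a)$ already rounds the excess away; when it is a half-integer, I would invoke the closed-walk-on-a-tree lemma (the parity principle) to argue that on any tree component of $G_a$ each closed walk traverses every edge an even number of times, so the shared edge used to merge $w_i$ and $w_j$ is traversed at least four times jointly, contributing precisely the missing $\tfrac12$ to the edge count. Choosing the pair $(w_i, w_j)$ to merge so that this tree-parity argument applies, and arranging the case split between the disconnected and connected $H$ regimes, is the most delicate bookkeeping in the proof.
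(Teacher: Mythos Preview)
Your inductive strategy differs from the paper's, and while the disconnected-$H$ branch is fine, the connected-$H$ branch has a real gap. A single concatenation merge gives $\wt(a)=\wt(a')\le 1+\sum_i\tfrac{\ell(w_i)-2}{2}+\tfrac12$. When the target is an integer, integrality indeed closes the gap; but when the target is a half-integer $M+\tfrac12$, what you must prove is $\wt(a)\le M$, whereas you have only $\wt(a)\le M+1$: you are short a \emph{full} unit, not a half. Your parity-principle fix cannot supply it. You applied the inductive hypothesis to $a'$ as a black box, so knowing that some shared edge is traversed four times in $a$ does not retroactively sharpen the bound $\wt(a')\le 1+\sum_{a'}\tfrac{\ell-2}{2}$; and $G_a$ need not be a tree, so there may be no pair $(w_i,w_j)$ for which the tree-parity observation even applies. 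The phrase ``contributing precisely the missing $\tfrac12$ to the edge count'' has no referent in your argument---the induction never opened an edge count for $a'$ into which that half could be deposited.

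The paper's proof is direct rather than inductive. It fixes a spanning tree $G'=(V',E')$ of the connected graph $G_a$, so that $\wt(a)=1+\#E'$, and bounds $\#E'$ via an \emph{edge-bounding table}: a $\{0,1\}$-table $X$ indexed by the traversal positions $(i,j)$, constrained so that $\tfrac12\sum X_{ij}\ge\#E'$. One starts from the indicator of $\{A_{ij}\in E'\}$; whenever some row $i_0$ is all $1$'s, the word $w_{i_0}$ is a closed walk on the tree $G'$, hence each of its edges occurs an even number of times in that row, and by (S2) at least one of those edges occurs in another row as well---so one may zero out that entry and still have an edge-bounding table. Iterating produces a table with a $0$ in every row, whence $\#E'\le\tfrac12(\#I-n)=\sum_i\tfrac{\ell(w_i)-2}{2}$ in a single pass. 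The ``$-1$ per word'' is thus earned uniformly, with no half-unit slack and no parity case split; this is also exactly the structure reused in the proof of Lemma~\ref{lemma:CLTword-2}.
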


A sentence $a = (w_1,\dots,w_n)$ is called a CLT sentence if $a$ is a weak CLT sentence and the above equality holds, namely,
\[
	\wt (a) = 1 + \sum_{i = 1}^n \frac{\l(w_i) - 2}{2}.
\]

\begin{lem}\label{lemma:CLTword-2}
Let $a = (w_1,\dots,w_n)$ be a CLT sentence with $w_i \in \U, i = 1,2,\dots,n$. Then the following hold.
\begin{itemize}
	\item[\rm(i)] For each $i$, there exists unique $j \neq i$ such that $E_{w_i} \cap E_{w_j} \neq \emptyset$. 
	\item[\rm(ii)] The number $n$ is even and there exists a perfect matching $\sigma \in S_n$ such that
		\begin{itemize}
			\item[\rm (a)]	$a_i = (w_{\sigma(2i - 1)}, w_{\sigma(2i)})$ is a CLT pair, $i = 1,2,\dots, n/2$;
			\item[\rm (b)] $\{E_i\}_{i = 1}^{n/2}$ are disjoint sets, where $G_{i} = (V_i, E_i)$ denotes the graph of $a_i$;
			\item[\rm (c)] $\{\{V_i \setminus \{1\} \}\}_{i = 1}^{n/2}$ are disjoint sets.
		\end{itemize}
\end{itemize}
\end{lem}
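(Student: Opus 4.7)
The plan is to peel apart $a$ according to the edge-sharing pattern of its words and refine the weight bound from Lemma~\ref{lemma:CLTword-1}. Introduce the auxiliary graph $H$ on $[n]$ with $i \sim_H j$ iff $E_{w_i} \cap E_{w_j} \neq \emptyset$; condition~(S2) says $H$ has no isolated vertex. Let $C_1, \dots, C_r$ be the connected components of $H$ and set $a_s := (w_i)_{i \in C_s}$, with graph $G_{a_s}=(V_{a_s}, E_{a_s})$. Because any edge in $E_{a_s}$ can lie in no $E_{w_j}$ with $j \notin C_s$ (otherwise $H$ would link the two components), the edge sets $E_{a_s}$ are pairwise disjoint and $N_e^{a_s} = N_e^a \ge 2$ for every $e \in E_{a_s}$. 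Hence each $a_s$ is itself a weak CLT sentence (its condition~(S2) coming from connectedness of $C_s$ in $H$), and Lemma~\ref{lemma:CLTword-1} gives $\wt(a_s) \le 1 + \sum_{i \in C_s}(\l(w_i) - 2)/2$.

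Since every word begins at vertex~$1$, that vertex lies in each $V_{a_s}$, so
\[
\wt(a) = \Bigl|\bigcup_{s=1}^{r} V_{a_s}\Bigr| \le \sum_{s=1}^{r} \wt(a_s) - (r-1) \le 1 + \sum_{i=1}^{n} \frac{\l(w_i) - 2}{2}.
\]
The CLT-sentence hypothesis forces equality throughout, yielding two facts: each $a_s$ is itself a CLT sentence, and $V_{a_s} \cap V_{a_{s'}} = \{1\}$ for $s \neq s'$.

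Once I also establish $|C_s| = 2$ for every $s$, the whole lemma follows. Indeed, (i) then holds, since a third $H$-neighbor of any $w_i$ would force the containing component to have size at least three. Each pair $a_s = (w_i, w_j)$ satisfies (P2) by $i \sim_H j$ and (P1) by $N_e^{(w_i,w_j)} = N_e^{a_s} \ge 2$, so it is a weak CLT pair; the equality inherited from the component upgrades it to a CLT pair, giving (ii)(a). Assertion (ii)(b) is immediate from the disjointness of the $E_{a_s}$, and (ii)(c) is the vertex-disjointness extracted above.

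The hard step is showing $|C_s| = 2$. I would proceed by contradiction: suppose a component $b = (u_1, \dots, u_m)$ has $m \ge 3$, noting that $b$ is itself a CLT sentence whose $H$-graph is connected. Applying the equality in Lemma~\ref{lemma:CLTword-1} to $b$ yields the identity $\sum_{e \in E_b}(N_e^b - 2) = m - 2c$, where $c$ denotes the cyclomatic number of $G_b$. Coupling this with the parity principle applied to each $u_i$ on a spanning tree of $G_b$, and with Lemma~\ref{lem:weak-CLT-pair} applied to any two of the $u_i$'s that share an edge, I expect to show that within $b$ each word has at most one edge-sharing partner, which forces $m = 2$ and contradicts connectedness of $b$'s $H$-graph. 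This is where the main difficulty lies; a naive induction on $m$ by deleting a single word fails because condition~(S1) need not survive word-removal, so the argument will instead peel a leaf pair of words at a time, using the identity above to ensure the residual sub-sentence remains a CLT sentence.
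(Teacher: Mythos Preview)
Your decomposition via the ``sharing graph'' $H$ is clean, and the reductions are correct: once each $H$-component has size two, parts (i) and (ii) follow exactly as you say. The gap is the step you yourself flag as ``where the main difficulty lies'': you have not actually shown $|C_s|=2$. The identity $\sum_{e}(N_e^{b}-2)=m-2c$ is true and useful, but the ``peel a leaf pair'' strategy is circular as written---a leaf in the $H$-graph restricted to $b$ is precisely a word with a unique edge-sharing partner, which is what you are trying to establish. Your invocation of Lemma~\ref{lem:weak-CLT-pair} on pairs $(u_i,u_j)$ only gives $\wt((u_i,u_j))\le(\l(u_i)+\l(u_j))/2-1$, and you have not explained how several such pairwise bounds combine into a global contradiction. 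In the tree case $c=0$ your ingredients \emph{do} suffice: parity forces $N_e^{b}\ge 2k_e$ where $k_e$ is the number of words containing $e$, so $\sum_e(k_e-1)\le m/2$; on the other hand, connectivity of $H|_{b}$ viewed as a hypergraph with hyperedges $\{i:e\in E_{u_i}\}$ forces $\sum_e(k_e-1)\ge m-1$, whence $m\le 2$. But for $c\ge 1$ parity is unavailable, the lower bound $N_e^{b}\ge k_e$ is too weak, and your sketch gives no replacement.

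The paper bypasses the $H$-component analysis altogether. It returns to the edge-bounding table $X$ built in the proof of Lemma~\ref{lemma:CLTword-1}: equality in the CLT-sentence condition forces $X$ to have exactly one $0$ in each row, and equality also forces $N'_e=2$ for every spanning-tree edge $e\in E'$. Calling the edge at the $0$-position of row $i$ the distinguished edge $e_i$, one shows by a short case analysis (according to whether $e_i\in E'$ or not, using that all other edges of $w_i$ lie in $E'$) that for each $i$ there is a \emph{unique} $j\neq i$ with $e_j=e_i$. This directly produces the perfect matching; the disjointness statements then follow from the fact that the concatenated walks $w_i\vee w_j$ are closed walks on $G'$ with pairwise disjoint edge sets. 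The key device is thus the single exceptional edge per word supplied by the table, not a global component count.
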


\begin{proof}[Proof of Lemma~\ref{lemma:CLTword-1}]
This lemma is a special case of \cite[Lemma~2.1.34]{Anderson2010}. However, we mention the proof here because it will be used in the next lemma. Let $a=(w_1, w_2, \dots, w_n)$ be a weak CLT sentence, where $w_i = \{s_{i,j}\}_{j = 1,\dots, \l(w_i)}$. Let $I = \cup_{i = 1}^n \{i\} \times \{1,2,\dots,\l(w_i) - 1\}$ and let $A$ be an $n$ rows left-justified table whose entries are the edges of $a$, namely, 
\[
	A_{ij} = (s_{i,j}, s_{i, j+1}), \quad(i,j) \in I.
\]

Let $G_a = (V_a, E_a)$ be the graph of the sentence $a$. Note that $G_a$ is a connected graph because every word is a closed word starting at $1$.  Let $G' = (V', E')$ be any spanning tree in $G_a$. Then we have $\wt(a) = 1 + \# E'$ and so in order to proof the lemma, we just have to bound $\#E'$.

Now let $X = \{X_{ij}\}_{(i,j) \in I}$ be a table of the same ``shape'' as $A$, but with all entries equal either to $0$ or $1$. We call $X$ an edge-bounding table if the following conditions hold:
\begin{itemize}
	\item[(E1)] for all $(i,j) \in I$, if $X_{ij} = 1$, then $A_{ij} \in E'$;
	\item[(E2)] for each $e \in E'$, there exist distinct $(i_1, j_1), (i_2,j_2) \in I$ such that $X_{i_1, j_1} = X_{i_2, j_2} = 1$ and $A_{i_1, j_1} = A_{i_2, j_2} = e$;
	\item[(E3)] for each $e \in E'$ and index $i \in \{1,\dots,n\}$, if $e$ appears in the $i$th row of $A$, then there exists $(i,j) \in I$ such that $A_{ij} = e$ and $X_{ij} = 1$.
\end{itemize}
 
For an edge-bounding table $X$, the corresponding quantity $\frac 12 \sum_{(i,j) \in I} X_{ij}$ bounds $\# E'$, whence the terminology. At least one edge-bounding table exists, namely the table with a $1$ in position $(i,j)$ for each $(i,j) \in I$ such that $A_{ij} \in E'$ and $0$'s elsewhere. Now let $X$ be an edge-bounding table such that for some index $i_0$ all the entries of $X$ in the $i_0$th row are equal to $1$. Then all egdes of $w_{i_0}$ belongs to $E'$. In other words, $w_{i_0}$ is a closed walk in the tree $G'$, hence every entry in the $i_0$th row of $A$ appears there an even number of times and a {\it fortiori} at least twice. Now choose $(i_0, j_0) \in I$ such that $A_{(i_0,j_0)} \in E'$ appears in more than one row of $A$. Let $Y$ be the table obtained by replacing the entry $1$ of $X$ in position $(i_0,j_0)$ by the entry $0$. Then it is not difficult to check that $Y$ is again an edge-bounding table. Proceeding in this way we can find an edge-bounding table with $0$ appearing at least once in every row, and hence we have 
\[
	\#E' \le \frac 12 (\#I - n) = \frac {\sum_{i = 1}^n (\l(w_i) - 2)}{2}.
\]
The lemma is proved.
\end{proof}

\begin{proof}[Proof of Lemma~\ref{lemma:CLTword-2}](i) Assume that $a = (w_1, \dots, w_n)$ is a CLT sentence with $w_i \in \U, i = 1,2,\dots,n$. Let $G_a, G'$ be the graph of $a$ and the spanning tree as in the proof of Lemma~\ref{lemma:CLTword-1}. Moreover, let $X$ be an edge-bounding table satisfying the condition that at least one entry is $0$ in each row. Then, recall that 
\[
	\# E' \le \frac{1}{2} \sum_{(i,j) \in I} X_{ij} \le \frac {\sum_{i = 1}^n (\l(w_i) - 2)}{2}.
\]
Therefore, the above two inequalities must become equalities by the definition of CLT sentence. Consequently, the edge-bounding table $X$ has exactly one $0$-entry in each row. For each $i$, let $e_i$ denote the edge $A_{ij}$ at the position $X_{ij} = 0$. Note that by the first property (property (E1)) of the edge-bounding table $X$,
\begin{itemize}
\item [(*)] all edges of $w_i$, except at most one edge $e_i$, belong to $E'$.
\end{itemize} 
We claim that for each $i$, there is a unique $\check i \neq i$ such that $e_{\check i} = e_i$. This claim is shown as follows.

Let 
\[
	N'_e := \# \{(i,j) \in I : X_{ij} = 1, A_{ij} = e\}. 
\]
Then the two equalities imply that $N'_e = 2$ for all $e \in E'$.  

\emph{Uniqueness.} Assume that there are at least three words $w_{i_1}, w_{i_2}, w_{i_3}$ such that $e_{i_1} = e_{i_2} = e_{i_3} = (s,\bar s)$. Since we consider words which do not contain self edge, assume without loss of generality that $s \neq 1$. Then each word $w_{i_k}$ contains a walk on the tree $G'$ from $1$ to $s$ (or from $s$ to $1$), which can be chosen to traverse only those edges $A_{i_k,j}$ with $X_{i_k,j} = 1$. Therefore, there exists some edge $e$ with $N'_e \ge 3$, which is a contradiction.

\emph{Existence.} Now fix some index $i$. Then either $e_i \not \in E'$ or $e_i \in E'$. 

Case 1: $e_i \not \in E'$. In this case, $N_e^{w_i} = 1$ by (*). Thus, $e_i \in E_{w_{i_1}}$ for some $i_1 \not = i$ because $N_e^a \ge 2$ (see property (S1)). It also follows from (*) that $e_{i_1} = e_i$. Assume that $e_i = (s, \bar s)$ and $w_i$ is a walk $1 \to s \to \bar s \to 1$. The word $w_{i_1}$ may be either $1 \to s \to \bar s \to 1$ or $1 \to \bar s \to s \to 1$. We construct a new word/walk $w_i \vee w_{i_1}$ as follows. Walk from $1$ to $s$ by $w_i$, then go to $\bar s$ by $w_{i_1}$, an back to $1$ by $w_i$. A new word $w_i \vee w_{i_1}$ of length $\l(w_i) + \l(w_{i_1}) - 3$ is a closed walk on a tree $G'$, and thus $N_e^{w_i \vee w_{i_1}}$ is even, and hence is at least $2$. It follows that $N_e^{w_i \vee w_{i_1}} = 2$ because it is bounded by $N'_e$.  
 
Case 2. $e_i \in E'$. In this case, $w_i$ is a closed walk on the tree $G'$, which implies that $N_{e_i}^{w_i}$ is even. Moreover, it is bounded by $1 + N'_e = 3$. Thus $N_{e_i}^{w_i} = 2$. Therefore, in the $i$th row, there is only one pair $(i,j)$ such that $X_{i,j} = 1$ and $A_{ij} = e_i$. By property (E2) of edge-bounding table, there is another pair $(i_1, j_1)$ such that $X_{i_1,j_1} = 1$ and $A_{i_1, j_1} = e_i$. Note that $i_1 \neq i$. 

Next, we show that $e_{i_1} = e_{i}$. Indeed, assume to the contrary that $e_{i_1} \neq e_i$. There are two cases to consider.
\begin{itemize}
\item if $e_{i_1} \in E'$, then by the same argument as in the beginning of case~2, it follows that $N_{e_i}^{w_{i_1}} = 2$, therefore $N'_{e_i} \ge 3$, which is a contradiction;
\item if $e_{i_1} \not\in E'$, then by case 1, there exists $i_2$ with $e_{i_2} = e_{i_1}$ and $N_{e_i}^{w_{i_1}\vee w_{i_2}} = 2$. It also follows that $N'_e \ge 3$, the same contradiction.
\end{itemize}
We also construct a new word/walk $w_i \vee w_{i_1}$ as in case 1. 

\noindent (ii) It is clear that $n$ must be an even number because $n$ words $w_1,\dots, w_n$ can be partition in pairs which have the same $e_i$. We construct a permutation $\sigma$ on $\{1,2,\dots,n\}$ as follows. Let 
\[
	\begin{cases}
		\sigma(1) = 1, &\\
		\sigma(2) = j, &\text{if $(w_1, w_j)$ is a pair.} 
	\end{cases}
\]
Then by induction, we define for $i = 2,3,\dots, n/2$,
\[
	\begin{cases}
		\sigma(2i+1) = \min \{\{1, \dots, n\} \setminus \{\sigma(1),\dots, \sigma(2i)\}, &\\
		\sigma(2i + 2) = j, &\text{if $(w_{\sigma(2i+1)}, w_j)$ is a pair.} 
	\end{cases}
\]
It is clear that $\sigma$ is a perfect matching. Moreover words/walks $w_{\sigma(2i - 1)} \vee w_{\sigma(2i)}$ are distinct walks on the tree $G'$. The rest of lemma follows. 
\end{proof}

\section{The Wigner semicircle law for spectral measures}
In this section, we will show that spectral measures of Wigner matrices also converge weakly, in probability, to the semicircle distribution.
Recall that $\{\xi_{ij}\}_{1\le i\le j}$ are independent real random variables with the following properties:
\begin{itemize}
	\item[(i)] $\{\xi_{ii}\}_{1\le i}$ is an i.i.d.\ sequence with $\E[\xi_{11}] = 0$ and $\E[|\xi_{11}|^p]<\infty, p = 2,3,\dots$;
	\item[(ii)] $\{\xi_{ij}\}_{1\le i < j}$ is another i.i.d. sequence with $\E[\xi_{12}]=0, \E[\xi_{12}^2] = 1$ and $\E[|\xi_{12}|^p] < \infty, p = 3,4,\dots.$
\end{itemize} 
Recall also that the Wigner matrix $X_N$ is defined as 
\[
	X_N(i,j) = X_N(j,i) = \frac{\xi_{ij}}{\sqrt{N}}, \quad 1 \le i \le j \le N.
\]

We begin with the following expression for $X_N^k (1,1)$,
\begin{align*}
	X_N^k (1,1) &= \sum_{i_1, i_2, \dots, i_{k - 1} = 1}^N  X_{1,i_1} X_{i_1, i_2} \cdots X_{i_{k - 1},1} \\
	&= \frac{1}{N^{\frac k2}} \sum_{i_1, i_2, \dots, i_{k - 1} = 1}^N  \xi_{(1,i_1)} \xi_{(i_1, i_2)} \cdots \xi_{(i_{k - 1},1)} \\
	&= \frac{1}{N^{\frac k2}} \sum_{w \in \W_k^{(N)}} T_w,
\end{align*}
where $T_w = \prod_{e \in E_w} \xi_e^{N_e^w}$.  

\begin{lem}\label{lemma:convergence-of-moments}
\begin{itemize}
	\item[\rm (i)]For odd  $k$, 
		\[
			\E[X_{N}^k(1,1)] \to 0 \text{ as }N \to \infty.
		\]
	\item[\rm (ii)] For even $k$,
		\[
			\E[X_{N}^k(1,1)] \to C_{k/2} \text{ as }N \to \infty,
		\]
	where $C_{n}$ denotes the $n$th Catalan number, 
	\[
		C_{n} = \frac{\begin{pmatrix} 2n \\ n\end{pmatrix}}{n + 1} = \frac{(2n)!}{(n+1)! n !},
	\]
	which is the numbers of equivalence classes of Wigner words of length $2n + 1$.
\end{itemize}
\end{lem}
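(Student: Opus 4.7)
The plan is to expand
\[
\E[X_N^k(1,1)] = \frac{1}{N^{k/2}} \sum_{w \in \W_k^{(N)}} \E[T_w]
\]
and classify the summands by the visiting counts $N_e^w$. Because $\{\xi_e\}_{e \in E_w}$ are jointly independent and each has mean zero, $\E[T_w] = 0$ whenever some $e \in E_w$ satisfies $N_e^w = 1$. Thus only weak Wigner words contribute, and for these $\E[T_w]$ is bounded by a constant depending only on the equivalence class of $w$, since all moments of $\xi_{11}$ and $\xi_{12}$ are finite.

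Next I would group the sum by equivalence classes. Because every word starts at the letter $1$, an equivalence class of weight $t$ has
\[
(N-1)(N-2)\cdots(N-t+1) = N^{t-1} + O(N^{t-2})
\]
representatives in $\W_k^{(N)}$, so its contribution to $\E[X_N^k(1,1)]$ is of order $N^{t-1-k/2}$. For a weak Wigner word $w \in \W_k$, the weight bound stated just before Lemma~\ref{lem:weak-CLT-pair} gives $\wt(w) \le (k+2)/2$, and since $\wt(w)$ is an integer this sharpens to $\wt(w) \le (k+1)/2$ when $k$ is odd. Hence for odd $k$ every class contributes $O(N^{-1/2})$, and since the number of equivalence classes depends only on $k$, part (i) follows.

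For part (ii), only equivalence classes achieving the maximal weight $t = 1 + k/2$ survive the limit. For such $w$, saturating $\wt(w) \le 1 + \# E_w$ forces $G_w$ to be a tree, and saturating $\# E_w \le k/2$ together with $\sum_e N_e^w = k$ and $N_e^w \ge 2$ forces $N_e^w = 2$ for every edge. A tree contains no loops, so $E_w^s = \emptyset$; in other words $w$ is a Wigner word. Consequently $T_w = \prod_{e \in E_w} \xi_e^2$ with the $\xi_e$ being independent copies of $\xi_{12}$, giving $\E[T_w] = (\E[\xi_{12}^2])^{k/2} = 1$. Summing over the $C_{k/2}$ equivalence classes of Wigner words of length $k+1$ (a classical Catalan count, via the standard bijection between plane trees with $k/2$ edges and Dyck paths of semi-length $k/2$) yields $\E[X_N^k(1,1)] \to C_{k/2}$.

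This is essentially the original Wigner moment argument adapted to the spectral measure, and no step appears to be a serious obstacle. The only feature specific to the spectral setting is that all words start at the fixed letter $1$, which simply replaces the enumeration factor $N(N-1)\cdots(N-t+1)$ by $(N-1)(N-2)\cdots(N-t+1)$ and does not affect the leading asymptotics; the mild point requiring care is the saturation step, verifying that a weak Wigner word of maximal weight is automatically a Wigner word with only connecting edges so that the product $T_w$ reduces to a power of $\xi_{12}^2$.
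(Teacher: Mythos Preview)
Your proposal is correct and follows essentially the same approach as the paper: expand $\E[X_N^k(1,1)]$ over closed words, discard non--weak-Wigner words, group by equivalence class of weight $t$, use the counting factor $(N-1)\cdots(N-t+1)\sim N^{t-1}$, and identify the surviving top-weight classes for even $k$ as Wigner words with $\E[T_w]=1$. Your write-up actually spells out the saturation argument (tree, $N_e^w=2$, no self-edges) more explicitly than the paper, which simply cites the listed properties of Wigner words.
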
 
\begin{proof}
It is clear that
	\[
		\E[X_{N}^k(1,1)] = \frac{1}{N^{k/2}}\sum_{w \in \W_k^{(N)}} \E[T_w]. 
	\]
Recall that $T_w = \prod_{e \in E_w} \xi_e^{N_e^w}$, which implies that $\E[T_w] = \prod_{e \in E_w} \E[\xi_e^{N_e^w}]$. Thus $E[T_w] = 0$ unless $w$ is a weak Wigner word.
	
	Let $\W_{k;t}$ denotes a set of representatives for equivalence classes of weak Wigner words $w \in \W_k^{(t)}$ of weight $t$. Then for $N \ge t$, given a word $w \in \W_{k,t}$, there are exactly 
\[
	C_{N,t} := (N-1)(N-2) \cdots (N - t + 1)
\]
words in $\W_k^{(N)}$ that are equivalent to $w$.

Since the weight of a weak Wigner word of length $k + 1$ is bounded by $(k/2 + 1)$, and two equivalent words have the same graphs, we can rewrite the expression of $\E[X_{N}^k(1,1)]$ as
\begin{align*}
	\E[X_{N}^k(1,1)] &= \frac{1}{N^{k/2}} \sum_{t \le \frac k2 + 1} \sum_{w \in \W_{k;t}} \sum_{w' \in \W_k^{(N)} : w' \sim w} \E[T_{w'}]\\
	&= \frac{1}{N^{k/2}}\sum_{t \le \frac k2 + 1} C_{N,t} \sum_{w \in \W_{k;t}} \E[T_w]\\
	&= \sum_{t \le \frac k2 + 1} \frac{C_{N,t}}{N^{k/2}} \sum_{w \in \W_{k;t}} \E[T_w].
\end{align*}
Note that as $N \to \infty$, $C_{N,t}/ N^{t-1} \to 1$. Note also that the cardinality of $\W_{k;t}$ is finite and that $\E[T_w] < \infty$ because  all moments of $\{\xi_{ij}\}$ are finite. Therefore, as $N \to \infty$, 
\[
	\E[X_{N}^k(1,1)] \to \begin{cases}
		0, &\text{if $k$ is odd,}\\
		\sum_{w \in \W_{k; k/2 + 1}} \E[T_w], &\text{if $k$ is even}.
	\end{cases}
\] 
Finally, $w \in \W_{k; k/2 +1}$ means that $w$ is a Wigner word, and hence $\E[T_w] = 1$ by properties of Wigner words. Thus for even number $k$, the limit of $\E[X_{N}^k(1,1)]$ is equal to the number of equivalence classes of Wigner words of length $k+1$, which is nothing but the $(k/2)$th Catalan number. The lemma is proved.
\end{proof}

\begin{lem}\label{lemma:convergence-in-L2-of-moments}It holds that
\[
	\E[(X_{N}^k(1,1) - \E[X_{N}^k(1,1)])^2] \to 0 \text{ as }N \to \infty.
\]
\end{lem}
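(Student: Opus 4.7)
The plan is to bound the variance directly via a moment-method expansion, reducing the problem to a finite counting of equivalence classes of weak CLT pairs. The first step is to write
\[
\E\bigl[(X_N^k(1,1) - \E[X_N^k(1,1)])^2\bigr] = \frac{1}{N^k} \sum_{w_1, w_2 \in \W_k^{(N)}} \bigl( \E[T_{w_1} T_{w_2}] - \E[T_{w_1}]\E[T_{w_2}]\bigr),
\]
using the expansion of $X_N^k(1,1)$ as $N^{-k/2}\sum_{w} T_w$ established just before Lemma~\ref{lemma:convergence-of-moments}.

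The second step is to identify the pairs that contribute a nonzero term. If $E_{w_1} \cap E_{w_2} = \emptyset$, then $T_{w_1}$ and $T_{w_2}$ depend on disjoint subfamilies of the independent entries $\xi_e$, so they are independent and their covariance vanishes. When the edge sets do intersect, the covariance can still be nonzero only if either $\E[T_{w_1}T_{w_2}] \neq 0$ (which forces every edge of $E_a$ to be traversed at least twice, where $a = (w_1, w_2)$) or $\E[T_{w_1}]\E[T_{w_2}] \neq 0$ (which forces both $w_1$ and $w_2$ to be weak Wigner words, hence again $N_e^a \ge 2$ for every $e \in E_a$). In all contributing cases the pair therefore satisfies (P1) and (P2), i.e.\ it is a weak CLT pair. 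Verifying this dichotomy carefully is the one place that requires thought; everything else is routine.

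The third step invokes Lemma~\ref{lem:weak-CLT-pair} to conclude $\wt(a) \le k$ for every contributing pair (since $\l(w_1) = \l(w_2) = k+1$). Grouping the sum by equivalence classes, any class with a representative of weight $t \le k$ contains exactly $(N-1)(N-2)\cdots(N-t+1) \le N^{t-1}$ labellings in $\W_k^{(N)} \times \W_k^{(N)}$, because vertex $1$ is fixed and the remaining $t-1$ labels are chosen distinctly from $\{2,\dots,N\}$. The number of equivalence classes of weak CLT pairs of prescribed word lengths $(k+1, k+1)$ is a finite constant depending only on $k$, and each covariance term is bounded uniformly in terms of finitely many moments of $\xi_{11}$ and $\xi_{12}$.

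Combining these bounds yields
\[
\E\bigl[(X_N^k(1,1) - \E[X_N^k(1,1)])^2\bigr] \le \frac{C_k}{N^k} \cdot N^{k-1} = \frac{C_k}{N},
\]
which tends to $0$ as $N \to \infty$ (and has precisely the order expected from the $\sqrt{N}$-rescaling in the main theorem). The main obstacle is the second step, namely verifying that ``$\mathrm{Cov}(T_{w_1}, T_{w_2}) \neq 0$'' really forces $(w_1, w_2)$ to be a weak CLT pair; once this is settled, the argument is a direct parallel of the counting used in Lemma~\ref{lemma:convergence-of-moments}.
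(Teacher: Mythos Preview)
Your proposal is correct and follows essentially the same route as the paper: expand the variance as $N^{-k}\sum_{w_1,w_2}\E[\bar T_{w_1}\bar T_{w_2}]$, observe that only weak CLT pairs contribute, invoke Lemma~\ref{lem:weak-CLT-pair} to bound the weight by $k$, and finish by the equivalence-class counting of Lemma~\ref{lemma:convergence-of-moments}. Your second step spells out the ``covariance nonzero $\Rightarrow$ weak CLT pair'' implication more carefully than the paper (which just declares it clear), but the argument and the resulting $O(1/N)$ bound are the same.
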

\begin{proof}
We begin with the following expression
\[
	X_{N}^k(1,1) - \E[X_{N}^k(1,1)] = \frac{1}{N^{k/2}} \sum_{w \in \W_k^{(N)}} (T_w - \E[T_w]) =: \frac{1}{N^{k/2}} \sum_{w \in \W_k^{(N)}} \bar T_w.
\]
Here $\bar T_w := T_w - \E[T_w]$. Then 
\[
	(X_{N}^k(1,1) - \E[X_{N}^k(1,1)])^2 = \frac{1}{N^k} \sum_{w_1, w_2 \in \W_k^{(N)}}  \bar T_{w_1} \bar T_{w_2}  = \frac{1}{N^k} \sum_{w_1, w_2 \in \W_k^{(N)}}  \bar T_{(w_1, w_2)}, 
\]
where $\bar T_{(w_1, w_2)} :=  \bar T_{w_1} \bar T_{w_2}$. 

It is clear that $\E[\bar T_{(w_1, w_2)}] = 0$ unless $(w_1, w_2)$ is a weak CLT pair. Similar argument as in the proof of Lemma~\ref{lemma:convergence-of-moments} with noting that $\wt((w_1, w_2)) \le k $ if $(w_1, w_2)$ is a weak CLT pair, we have 
\[
	\E[(X_{N}^k(1,1) - \E[X_{N}^k(1,1)])^2]  = \sum_{t \le k} \frac{C_{N,t}}{N^k} \sum_{(w_1, w_2)\in \W_{k,k; t}} \E[\bar T_{(w_1, w_2)}].   
\]
Here $\W_{k,k; t}$ denotes a set of representatives for equivalence classes of weak CLT pair/sentence $(w_1, w_2)$ of weight $t$, where $w_1$ and $w_2$ are both $t$-words of length $k +1$. Therefore 
\[
	\E[(X_{N}^k(1,1) - \E[X_{N}^k(1,1)])^2] \to 0 \text{ as } N \to \infty,
\]
which completes the proof.
\end{proof}

As a direct consequence of Lemma~\ref{lemma:convergence-of-moments} and Lemma~\ref{lemma:convergence-in-L2-of-moments}, we have the following result.
\begin{lem}\label{lemma:convergence-in-probability-of-moments}
As $N \to \infty$, $X_{N}^k(1,1)$ converges in $L^2$, and hence, converges in probability to $\langle \sigma, x^k\rangle$.
\end{lem}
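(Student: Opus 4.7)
The plan is to simply glue together the two preceding lemmas via the standard $L^2$ triangle inequality, after identifying the limit constant with the corresponding moment of the semicircle law.

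First I would record the elementary fact that the moments of $\sigma$ are
\[
\langle \sigma, x^k \rangle = \begin{cases} 0, & k \text{ odd},\\ C_{k/2}, & k \text{ even}, \end{cases}
\]
where $C_n$ is the $n$th Catalan number. This is classical, obtained for instance by the substitution $x = 2\cos\theta$ in the defining integral. Together with Lemma~\ref{lemma:convergence-of-moments} this gives $\E[X_N^k(1,1)] \to \langle \sigma, x^k\rangle$ as $N \to \infty$ for every $k \ge 0$.

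Second, I would use the orthogonal decomposition
\[
\E\bigl[(X_N^k(1,1) - \langle \sigma, x^k\rangle)^2\bigr] = \E\bigl[(X_N^k(1,1) - \E[X_N^k(1,1)])^2\bigr] + \bigl(\E[X_N^k(1,1)] - \langle \sigma, x^k\rangle\bigr)^2.
\]
The first summand tends to $0$ by Lemma~\ref{lemma:convergence-in-L2-of-moments}, and the second summand tends to $0$ by the previous paragraph. Hence $X_N^k(1,1) \to \langle \sigma, x^k\rangle$ in $L^2$.

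Finally, since $L^2$ convergence implies convergence in probability (by Chebyshev's inequality, $\P(|X_N^k(1,1) - \langle \sigma, x^k\rangle| > \varepsilon) \le \varepsilon^{-2}\, \E[(X_N^k(1,1) - \langle \sigma, x^k\rangle)^2] \to 0$), the conclusion follows. There is no real obstacle here: the entire content of the lemma has already been established in Lemmas~\ref{lemma:convergence-of-moments} and~\ref{lemma:convergence-in-L2-of-moments}, and the only task is to recognize $0$ and $C_{k/2}$ as the moments of the semicircle distribution and to assemble the two statements. The weak convergence $\nu_N \to \sigma$ in probability announced in the introduction then follows from the compact support of $\sigma$ (moments determine the limit, and pointwise convergence in probability of all moments to the moments of a compactly supported measure implies weak convergence in probability).
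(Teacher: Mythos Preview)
Your proof is correct and is exactly what the paper intends: the lemma is stated there as ``a direct consequence of Lemma~\ref{lemma:convergence-of-moments} and Lemma~\ref{lemma:convergence-in-L2-of-moments}'' with no further argument, and you have simply written out that deduction explicitly via the bias--variance decomposition. The final paragraph about weak convergence of $\nu_N$ is not part of this lemma but is handled separately in the paper; you may omit it here.
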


We are now in a position to investigate the semicircle law for spectral measures of Wigner matrices.
\begin{defn} Let $A$ be a real symmetric matrix of degree $N$ and $v$ be a unit vector in $\R^N$. Then the spectral measure $\mu$ of $(A, v)$ is the probability measure on $\R$ satisfying 
\[
	\int_\R x^k \mu(dx) = (A^k v, v), \quad k=0,1,2,\dots,
\]
where $\left(\cdot,\cdot \right)$ denotes the inner product in $\R^N$.
\end{defn}

Let $A$ be a real symmetric matrix. Let $\lambda_1 \le \lambda_2 \le \cdots \le \lambda_N$ be the eigenvalues of $A$, and let $v_1, v_2, \dots, v_N$ be corresponding eigenvectors which are chosen to be an orthonormal system of $\R^N$. Then the spectral decomposition of $A$ can be written as
\[
	A = \sum_{j = 1}^N \lambda_j v_j  v_j^T.
\] 
Consequently, 
\[
	A^k = \sum_{j = 1}^N \lambda_j^k v_j  v_j^T,
\]
and thus, 
\[
	(A^k v, v) = \sum_{j = 1}^N \lambda_j^k (v, v_j)^2.
\]
Therefore, the spectral measure of $(A, v)$ is given by
\[
	\mu = \sum_{j = 1}^N (v, v_j)^2 \delta_{\lambda_j}.
\]

Now let $\nu_N$ be the spectral measure of $(X_N, e_1)$, where $e_1 = (1,0,\dots,0)^T \in \R^N$. Then by definition,  
\[	
	\langle  \nu_N, x^k \rangle = (X_N^k e_1, e_1) = X_N^k(1,1).
\]

\begin{thm}
	\begin{itemize}	
		\item[\rm(i)] The $k$th moment of $\nu_N$ converges in probability to that of the semicircle law, namely, 
		\[
			\langle \nu_N, x^k\rangle \to  \langle \sigma, x^k\rangle \text{ in probability as $N \to \infty$}.
		\]
		\item[\rm(ii)] The spectral measure $\nu_N$ converges weakly, in probability, to the semicircle distribution.
	\end{itemize}
\end{thm}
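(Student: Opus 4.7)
Part (i) is essentially a restatement of Lemma~\ref{lemma:convergence-in-probability-of-moments}: since by definition $\langle \nu_N, x^k\rangle = (X_N^k e_1, e_1) = X_N^k(1,1)$, the convergence in $L^2$ (hence in probability) of $X_N^k(1,1)$ to $\langle \sigma, x^k\rangle$ is exactly the claim. The only thing to check is that the numerical constant $\lim_{N\to\infty} \E[X_N^k(1,1)]$ identified in Lemma~\ref{lemma:convergence-of-moments} really is $\langle \sigma, x^k\rangle$, which amounts to recalling that the $2n$th moment of the semicircle is the $n$th Catalan number $C_n$ while odd moments vanish.

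Part (ii) is the step where moment convergence has to be upgraded to weak convergence. The plan is to use the subsequence principle. Weak convergence in probability of $\nu_N$ to $\sigma$ means that for every bounded continuous $f$, $\langle \nu_N, f\rangle \to \langle \sigma, f\rangle$ in probability. Equivalently, every subsequence admits a further subsequence along which this holds almost surely for every such $f$ simultaneously. Given an arbitrary subsequence, part (i) together with a diagonal extraction produces a further subsequence $(N_j)$ along which, on an event of full probability, $\langle \nu_{N_j}, x^k\rangle \to \langle \sigma, x^k\rangle$ for \emph{every} nonnegative integer $k$ at once.

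On this full-probability event I would then invoke the classical deterministic method of moments. Tightness of $\{\nu_{N_j}\}$ is immediate from Chebyshev's inequality applied with the second moment: $\nu_{N_j}([-R,R]^c) \le \langle \nu_{N_j}, x^2\rangle / R^2$, and the right-hand side is bounded in $j$ since it converges to $\langle \sigma, x^2\rangle = 1$. Hence every Prokhorov subsequential weak limit $\mu$ exists; convergence of $\nu_{N_j}$ moments together with uniform integrability (supplied by boundedness of even moments of one higher order) identifies the moments of $\mu$ with those of $\sigma$, and since $\sigma$ is compactly supported it is determined by its moments, so $\mu = \sigma$. Therefore $\nu_{N_j}\to\sigma$ weakly a.s.\ along the chosen subsequence, and the subsequence principle gives weak convergence of $\nu_N$ to $\sigma$ in probability.

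The main conceptual obstacle is that weak convergence involves the uncountable family of all bounded continuous $f$, whereas moment convergence gives information only for the countable family $\{x^k\}$. The subsequence-plus-diagonal-extraction device together with compact support of the limit resolves this; apart from that, the only nontrivial input is the tightness bound, which is a one-line Chebyshev estimate using the already-established moment convergence.
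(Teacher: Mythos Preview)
Your argument for part (i) matches the paper's exactly: it is just Lemma~\ref{lemma:convergence-in-probability-of-moments} together with the identification of the semicircle moments as Catalan numbers.

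For part (ii) your proof is correct but follows a genuinely different route from the paper. The paper works \emph{directly} with a fixed bounded continuous $f$: it fixes $B>2$, shows via a higher-moment Chebyshev bound that $\langle \nu_N, x^k\mathbf{1}_{\{|x|>B\}}\rangle \to 0$ in probability for every $k$, then approximates $f$ uniformly on $[-B,B]$ by a polynomial $Q$ and decomposes $\langle \nu_N, f\rangle - \langle \sigma, f\rangle$ into five pieces (tail of $f$, polynomial approximation error, tail of $Q$, moment convergence for $Q$, and approximation error under $\sigma$), each of which is small. Your approach instead reduces to the deterministic method of moments via the subsequence principle: diagonalize to get a.s.\ convergence of all moments along a sub-subsequence, use a second-moment Chebyshev bound for tightness, invoke Prokhorov, and identify any limit with $\sigma$ by moment determinacy of a compactly supported law. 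The paper's argument is more self-contained and avoids appealing to Prokhorov's theorem; yours is more modular, packages the work into standard theorems, and makes the role of moment determinacy of $\sigma$ explicit. Both are standard ways to upgrade moment convergence to weak convergence, and both hinge on the same quantitative input, namely control of $\langle \nu_N, x^{2m}\rangle$ for all $m$.
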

\begin{proof}
The statement (i) is just Lemma~\ref{lemma:convergence-in-probability-of-moments}. 
	
Since $\sigma$ has compact support, we will show that (ii) follows from (i). Indeed, 
let $f$ be a bounded continuous function on $\R$. We need to prove that 
\[
	\langle \nu_N, f \rangle \to \langle \sigma, f \rangle \text{ in probability as $N \to \infty$.} 
\] 

Recall that $\sigma$ is supported in $[-2,2]$, which implies that $\langle \sigma, x^{2k}\rangle \le 2^{2k}$. Let $B > 2$ be fixed. Then, for $k = 0,1,\dots$,  
\begin{align*}
	|\langle \nu_N , x^k {\bf 1}_{\{|x|> B\}}\rangle| &= \left|\int_{\R} x^k {\bf 1}_{\{|x|> B\}} d\nu_N(x) \right| \\
	&\le \int_{\R} |x|^k {\bf 1}_{\{|x|> B\}} d\nu_N(x) \\
	&\le \frac{1}{B^{2n - k}} \int_{\R} x^{2n}  d\nu_N(x) = \frac{\langle \nu_N, x^{2n} \rangle}{B^{2n - k}}, \text{ for $k < 2n$}.
\end{align*}
By letting $N \to \infty$, we obtain
\[
	|\langle \nu_N , x^k {\bf 1}_{\{|x|> B\}}\rangle| \le \frac{\langle \nu_N, x^{2n} \rangle}{B^{2n - k}} \underset{\text{as $N\to \infty$ }}{\overset{\text{in probability}}{\longrightarrow }}   \frac{\langle \sigma, x^{2n} \rangle}{B^{2n - k}}  \le \frac{2^{2n}}{B^{2n - k}}.
\]
Note that $2^{2n}/B^{2n - k} \to 0$ as $n \to \infty$. Thus 
\[
	\langle \nu_N , x^k {\bf 1}_{\{|x|> B\}} \rangle  \to 0 \text{ in probability as $N \to \infty$.}
\]
Consequently, for any polynomial $Q$, 
\begin{equation}\label{convergence-in-probability-of-polynomial}
	\langle \nu_N , Q {\bf 1}_{\{|x|> B\}} \rangle \to 0 \text{ in probability as $N \to \infty$.}
\end{equation}

Given $\varepsilon > 0$, there is a polynomial $Q$ such that 
\[
	\sup_{|x| \le B} |f(x) - Q(x)| \le \varepsilon.
\]
Then consider the following decomposition   
\begin{align*}
	\langle \nu_N, f\rangle - \langle \sigma, f \rangle = \langle \nu_N, f {\bf 1}_{\{|x|> B\}}\rangle + \langle \nu_N, (f - Q){\bf 1}_{\{|x|\le B\}}\rangle \\
	- \langle \nu_N, Q {\bf 1}_{\{|x|> B\}}\rangle + (\langle \nu_N, Q \rangle - \langle \sigma, Q\rangle) + \langle \sigma, Q- f\rangle. 
\end{align*}
The first term and the third term converges to $0$ in probability  by \eqref{convergence-in-probability-of-polynomial}. The fourth term converges to $0$ in probability by (i) of this theorem. Finally, the second term and the fifth term is bounded by $\varepsilon$. Since $\varepsilon$ is arbitrary, it follows that $\langle \nu_N, f\rangle $ converges to $\langle \sigma, f\rangle$ in probability. The proof is complete.
\end{proof}

\section{Central limit theorem for moments of spectral measures}
This section investigates weak limits of moments of spectral measures, more precisely, the weak limits of $\sqrt{N}(X_{N}^k (1,1) - \E[X_{N}^k (1,1)])$ as $N$ tends to infinity. 
\subsection{Zero diagonal}
Recall that 
\[
	X_{N}^k(1,1) = \frac{1}{N^{\frac k2}}\sum_{w \in \W^{(N)}_k} T_w, 
\]
where $T_w = \prod_{e \in E_w} \xi_e^{N_e^w}$. 

Let 
\begin{align*}
	Y_{N,k} &:= \sqrt{N}\left( \frac{1}{N^{\frac k2}}\sum_{w \in \U^{(N)}_k} (T_w - \E[T_w]) \right) \\
	&= \frac{1}{N^{\frac{k-1}{2}}}\sum_{w \in \U^{(N)}_k} \bar T_w \\
	&\left(= \sqrt{N} (X_{N}^k(1,1) - \E[X_{N}^k(1,1)]), \text{ if }\xi_{11} = 0\right).
\end{align*}
For a sentence $a = (w_1,\dots,w_n)$, we denote 
\[
	\bar T_{a} = \bar T_{(w_1,\dots,w_n)} = \bar T_{w_1} \cdots \bar T_{w_n}.
\]

Next, we consider $\E[Y_{N,k_1} Y_{N,k_2}]$ for fixed $k_1, k_2 \ge 2$. It is clear that 
\[
	\E[Y_{N,k_1} Y_{N,k_2}] = \frac{1}{N^{\frac{k_1 + k_2}{2} - 1}} \sum_{w_1 \in \U^{(N)}_{k_1}, w_2 \in \U^{(N)}_{k_2}} \E[\bar T_{(w_1,w_2)}].
\]

\begin{lem}\label{lem:covariance} For $k_1, k_2 \ge 2$,
	\[
		\lim_{N \to \infty} \E[Y_{N,k_1} Y_{N,k_2}] = \sum_{(w_1, w_2) \in \U_{k_1, k_2}} \E[\bar T_{(w_1, w_2)}].
	\]
The limit is positive, if $k_1 + k_2$ is even, and only depends on the second and the fourth moments of $\xi_{12}$. It is zero, if $k_1 + k_2$ is an odd number. 
\end{lem}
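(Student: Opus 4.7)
The plan is to refine the moment-method bookkeeping of Lemma~\ref{lemma:convergence-in-L2-of-moments} by one order in $N$, to accommodate the additional $\sqrt{N}$ factor in the normalization of $Y_{N,k}$. First I would argue that $\E[\bar T_{(w_1,w_2)}] = 0$ unless $(w_1,w_2)$ is a weak CLT pair: if $E_{w_1}\cap E_{w_2}=\emptyset$, then $\bar T_{w_1}$ and $\bar T_{w_2}$ are independent mean-zero random variables and the product expectation vanishes; while if some edge $e\in E_a$ has $N_e^a = 1$, the single factor $\xi_e$ is independent of everything else in the product and has mean zero, again killing the expectation.

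Next I would group the surviving pairs by equivalence class. A representative of weight $t$ has exactly $C_{N,t} = (N-1)(N-2)\cdots(N-t+1)$ equivalent copies in $\U_{k_1}^{(N)}\times\U_{k_2}^{(N)}$ (the starting letter is pinned at $1$), so together with the weight bound $t\le (k_1+k_2)/2$ of Lemma~\ref{lem:weak-CLT-pair} I obtain
\[
\E[Y_{N,k_1}Y_{N,k_2}] = \sum_{t\le (k_1+k_2)/2}\frac{C_{N,t}}{N^{(k_1+k_2)/2-1}}\sum_{(w_1,w_2)\in\U_{k_1,k_2;t}}\E[\bar T_{(w_1,w_2)}],
\]
where $\U_{k_1,k_2;t}$ denotes a set of representatives of weak CLT pairs in $\U_{k_1}\times\U_{k_2}$ of weight $t$. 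Since $C_{N,t}/N^{(k_1+k_2)/2-1}\sim N^{t-(k_1+k_2)/2}$, only the extremal term $t = (k_1+k_2)/2$ -- that is, the CLT pairs -- survives the limit, with coefficient tending to $1$, and this produces the claimed expression. When $k_1+k_2$ is odd, $(k_1+k_2)/2$ is not an integer, $\U_{k_1,k_2} = \emptyset$, and the limit is $0$.

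For the remaining assertions I would invoke the dichotomy of Lemma~\ref{lem:CLT-pair}. In the tree case, each $u_i$ traverses every edge of $E_{u_i}$ exactly twice and one shared edge appears four times in total, so $\E[T_{u_i}] = 1$ and $\E[T_{u_1}T_{u_2}] = \E[\xi_{12}^4]$, giving $\E[\bar T_{(u_1,u_2)}] = \E[\xi_{12}^4]-1$. In the unicyclic case every edge of $E_a$ is visited exactly twice and, since property (P2) forces a shared edge of $(1,1)$-type, some edge of each $u_i$ is traversed only once; hence $\E[T_{u_i}] = 0$ and $\E[\bar T_{(u_1,u_2)}] = \prod_{e\in E_a}\E[\xi_{12}^2] = 1$. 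Both values depend only on $\E[\xi_{12}^2]$ and $\E[\xi_{12}^4]$ and are non-negative, so strict positivity of the total sum reduces to exhibiting at least one CLT pair with a nonzero contribution.

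The main obstacle I anticipate is precisely this positivity step: one has to produce an explicit combinatorial construction of a CLT pair in $\U_{k_1,k_2}$ for every admissible $(k_1,k_2)$ with $k_1,k_2\ge 2$ and $k_1+k_2$ even. Natural candidates are pairs of Wigner-type walks glued along a single common connecting edge in the even/even case, and cycle-type pairs obtained by walking a short cycle forwards and backwards in the odd/odd case, though verifying that each admissible $(k_1,k_2)$ admits such a construction requires a small amount of case analysis. Everything else is a direct refinement of Lemma~\ref{lemma:convergence-in-L2-of-moments}.
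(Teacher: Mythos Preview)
Your proposal is correct and matches the paper's proof essentially step for step: the reduction to weak CLT pairs, the grouping by equivalence class and weight, the extraction of the top-weight term via $C_{N,t}/N^{(k_1+k_2)/2-1}$, and the dichotomy of Lemma~\ref{lem:CLT-pair} yielding $\E[\bar T_{(u_1,u_2)}]\in\{\E[\xi_{12}^4]-1,\,1\}$ are all exactly what the paper does.

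The only place you diverge is that you overestimate the difficulty of the positivity step. The paper does not carry out any case analysis or explicit construction for each admissible $(k_1,k_2)$; it simply observes that the set of CLT pairs with $\wt(a)=\#E_a$ is nonempty, and since each such pair contributes exactly $1$ (independently of the distribution of $\xi_{12}$), positivity follows. Your suggested ``cycle walked forwards and backwards'' already furnishes such a pair for all $k_1,k_2\ge 2$ with $k_1+k_2$ even, so no further case analysis is needed. (Incidentally, your phrase ``shared edge of $(1,1)$-type'' is a bit cryptic; it would read more clearly as ``the shared edge must satisfy $N_{e_0}^{u_1}=N_{e_0}^{u_2}=1$ since $N_{e_0}^a=2$.'')
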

\begin{proof}
	It is clear that $\E[\bar T_{(w_1,w_2)}] = 0$ unless $(w_1, w_2)$ is a weak CLT pair. Let $\U_{k_1, k_2}^{(t)}$ denote a set of representatives for equivalence classes of weak CLT pairs $(w_1, w_2)$ of weight $t$, where $w_1$ and $w_2$ are $t$-words of lengths $k_1 +1$ and $k_2 + 1$, respectively. By Lemma~\ref{lem:weak-CLT-pair}, $t \le (k_1 + k_2)/2$ unless $\U_{k_1, k_2}^{(t)} = \emptyset$. For $t = (k_1 + k_2)/2$, the set $\U_{k_1, k_2}^{(t)}$ is just a set of representatives for equivalence classes of CLT pairs $\U_{k_1, k_2}$. An argument similar to Lemma~\ref{lemma:convergence-of-moments}, we obtain
\[
	\lim_{N \to \infty} \E[Y_{N,k_1} Y_{N,k_2}] = 
	\begin{cases}
		0, &\text{if $k_1 + k_2$ is odd,}\\
		\sum\limits_{(w_1, w_2) \in \U_{k_1, k_2}} \E[\bar T_{(w_1, w_2)}], &\text{if $k_1 + k_2$ is even}.
	\end{cases}
\]

Next, let $(w_1,w_2) \in \U_{k_1, k_2}$. If $\wt(a) = 1+ \# E_a$, then by Lemma~\ref{lem:CLT-pair} (i), $\E[T_{w_i}] = 1, i = 1,2$. Moreover, $\E[T_{w_1} T_{w_2}] = \E[\prod_{e \in E_a} \xi_e^{N_e^a}] = \E[\xi_{e_0}^4] = \E[\xi_{12}^4]$, where $e_0$ is the only edge with $N_{e_0}^a = 4$. Thus 
\[
	\E[\bar T_{(w_1, w_2)}] = \E[T_{w_1}T_{w_2}] - \E[T_{w_1}] \E[T_{w_2}] = \E[\xi_{12}^4] - 1 \ge 0.
\] 
The last inequality holds because $\E[\xi_{12}^4] - 1 = \E[(\xi_{12}^2 - 1)^2] \ge 0$.

Now, if $\wt(a) = \# E_a$, then $\E[T_{w_i}] = 0$ because there exists an edge which is visited only one time by $w_i, i=1,2$. Further, since each edge is visited exactly two times by $(w_1, w_2)$, it follows that $\E[T_{w_1}T_{w_2}] = 1$. Combining those we have 
\[
	\E[\bar T_{(w_1, w_2)}] = 
	\begin{cases}
		1, &\text{if $\wt(a) = \# E_a $}, \\
		\E[\xi_{12}^4] - 1 \ge 0, &\text{if $\# \wt(a) = 1 + E_a$}.
	\end{cases}
\]
Finally, the set of CLT pairs $a = (w_1, w_2)$ for which $\wt(a) = \# E_a$ is not empty. Thus, the rest of this lemma follows.
\end{proof}

By an argument similar to the previous lemma, Lemma~\ref{lemma:CLTword-1} implies the following statement.
\begin{lem}\label{lem:CLT-sentence}
For $k_1, k_2, \dots, k_n \ge 2$, 
\[
	\lim_{N \to \infty} \E\left[\prod_{i = 1}^n Y_{N,k_i} \right] = \sum_{(w_1,\dots,w_n) \in \U_{k_1,\dots,k_n}} \E[\bar T_{(w_1, \dots, w_n)}].
\]
Here $\U_{k_1,\dots,k_n}$ denotes a set of representatives for equivalence classes of CLT sentences $a = (w_1,\dots,w_n)$, where $w_i \in \U_{k_i}^{(t)}$, $t = 1 + \sum_{i = 1}^n \frac{k_i - 1}{2}$. 
\end{lem}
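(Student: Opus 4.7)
The plan is to mimic the argument of Lemma~\ref{lem:covariance} almost verbatim, only with $n$ words in place of $2$. Starting from
\[
	\E\left[\prod_{i=1}^n Y_{N,k_i}\right] = \frac{1}{N^{\sum_{i}(k_i-1)/2}} \sum_{(w_1,\ldots,w_n) \in \U_{k_1}^{(N)}\times \cdots \times \U_{k_n}^{(N)}} \E[\bar T_{(w_1,\ldots,w_n)}],
\]
I would first check that $\E[\bar T_a]=0$ unless the sentence $a=(w_1,\ldots,w_n)$ is a weak CLT sentence, i.e.\ satisfies (S1) and (S2). The reason is the standard factorization argument: if some word $w_i$ shares no edge with the others, then $\bar T_{w_i}$ is independent of the remaining factors and mean-zero; and if any edge $e$ satisfies $N_e^a = 1$, then $\xi_e$ appears alone to the first power and is independent of all the other $\xi$'s, which again forces the expectation to vanish.

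Next I would group the contributing sentences into equivalence classes. Because every word starts at the fixed letter $1$, a representative sentence of weight $t$ produces exactly $C_{N,t}=(N-1)(N-2)\cdots(N-t+1)$ equivalent tuples in $\U_{k_1}^{(N)}\times\cdots\times\U_{k_n}^{(N)}$, and equivalent sentences share the same value of $\E[\bar T_a]$. Denoting by $\U_{k_1,\ldots,k_n;t}$ a set of representatives for weak CLT sentences of weight $t$, this yields
\[
	\E\left[\prod_{i=1}^n Y_{N,k_i}\right] = \sum_{t} \frac{C_{N,t}}{N^{\sum_i (k_i-1)/2}} \sum_{a\in \U_{k_1,\ldots,k_n;t}} \E[\bar T_a].
\]

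Now I would invoke Lemma~\ref{lemma:CLTword-1}: since $\l(w_i)=k_i+1$, it gives $t\le 1+\sum_i (k_i-1)/2$, with equality precisely for the CLT sentences that make up $\U_{k_1,\ldots,k_n}$. Because $C_{N,t}$ has exactly $t-1$ factors each of order $N$, the prefactor $C_{N,t}/N^{\sum_i (k_i-1)/2}$ tends to $1$ when $t=1+\sum_i (k_i-1)/2$ and to $0$ otherwise. The finite-moment hypothesis on the $\xi_{ij}$, together with the finiteness of each equivalence-class set, keeps every inner sum bounded; passing to the limit leaves only the leading term indexed by $\U_{k_1,\ldots,k_n}$, which is the claimed identity.

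The only genuinely new step beyond Lemma~\ref{lem:covariance} is the first one, the factorization argument that identifies weak CLT sentences as the only non-vanishing contributions; this is the multi-word extension of the (S1)--(S2) reasoning from Lemma~\ref{lem:covariance} and reduces to a case analysis on which subsets of $\{1,\ldots,n\}$ mutually share edges. Once this is in place, the remaining combinatorics is a direct asymptotic evaluation strictly parallel to the proofs of Lemmas~\ref{lemma:convergence-of-moments} and~\ref{lem:covariance}.
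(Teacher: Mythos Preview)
Your proposal is correct and matches the paper's intended argument exactly. The paper does not give an independent proof of this lemma; it simply states that ``by an argument similar to the previous lemma, Lemma~\ref{lemma:CLTword-1} implies the following statement,'' and what you have written is precisely that similar argument: expand the product, show via independence/factorization that only weak CLT sentences contribute, group into equivalence classes with multiplicity $C_{N,t}$, and use the weight bound $t\le 1+\sum_i (k_i-1)/2$ from Lemma~\ref{lemma:CLTword-1} to isolate the surviving term.
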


Let 
\[
	A(k_1, k_2) := \sum_{(w_1, w_2) \in \U_{k_1, k_2}} \E[\bar T_{(w_1, w_2)}] .
\]
Then the matrix $(A(k,l))_{k,l = 2,3,\dots}$ is symmetric. Each finite block $(A(k,l))_{k,l = 2}^n$ is positive semidefinite because it is the limit of the covariance matrix of random variables $(Y_{N,k})_{k = 2,\dots,n}$. Thus, there exists a sequence of mean zero jointly Gaussian random variables $\{\eta_k\}_{k = 2,3,\dots}$ defined on the same probability space such that 
\[
	\E[\eta_k \eta_l] = A(k,l).
\]

\begin{lem}
For even number $n$, 
\begin{equation}\label{limit-of-product}
	\sum_{(w_1,\dots,w_n) \in \U_{k_1,\dots,k_n}} \E[\bar T_{(w_1, \dots, w_n)}] = \sum_{\substack{\sigma \in S_n \\ \sigma \text{: perfect matching }}} \prod_{i = 1}^{n/2} A(k_{\sigma(2i -1)}, k_{\sigma(2i)}).
\end{equation}

\end{lem}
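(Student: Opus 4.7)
The strategy is to read off the identity as the factorization of both $\E[\bar T_a]$ and the enumeration of equivalence classes along the perfect-matching decomposition supplied by Lemma~\ref{lemma:CLTword-2}.

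Fix a CLT sentence $a=(w_1,\dots,w_n) \in \U_{k_1,\dots,k_n}$. By Lemma~\ref{lemma:CLTword-2}, $n$ is even and there is a canonical matching $\sigma \in S_n$ (constructed as in the proof of that lemma) such that $a_i := (w_{\sigma(2i-1)}, w_{\sigma(2i)})$ is a CLT pair, the edge sets $E_i$ of $G_{a_i}$ are pairwise disjoint, and the vertex sets $V_i \setminus \{1\}$ are pairwise disjoint. Because every $w_i \in \U$ has no self-edges, each edge appearing in $a$ is a connecting edge $\{u,v\}$ with $u \neq v$, so the entries $\xi_e$ indexed by distinct edges are independent. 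Since $\bar T_{a_i}$ is a polynomial in $\{\xi_e : e \in E_i\}$ and the $E_i$ are pairwise disjoint, the random variables $\bar T_{a_1},\dots,\bar T_{a_{n/2}}$ are jointly independent, giving
\[
\E[\bar T_a] \;=\; \prod_{i=1}^{n/2} \E[\bar T_{a_i}].
\]

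Next I would establish a bijection between equivalence classes $[a] \in \U_{k_1,\dots,k_n}$ and pairs $\bigl(\sigma, ([a_1],\dots,[a_{n/2}])\bigr)$ with $\sigma$ the canonical perfect matching and $[a_i] \in \U_{k_{\sigma(2i-1)},k_{\sigma(2i)}}$. The forward map is Lemma~\ref{lemma:CLTword-2}, which also confirms that $\sigma$ depends only on $[a]$. For the inverse, choose representatives of the $[a_i]$, relabel the vertices in each $V_i \setminus \{1\}$ so that the $V_i$'s intersect pairwise only in $\{1\}$, and insert their words into positions $\sigma(1),\dots,\sigma(n)$ of an $n$-tuple; the weight check $\wt(a) = 1 + \sum_i (\wt(a_i)-1) = 1 + \sum_j (k_j-1)/2$ verifies that the resulting sentence lies in $\U_{k_1,\dots,k_n}$, and different choices of relabelling yield equivalent sentences. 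Summing over $\sigma$ together with the factorization of $\E[\bar T_a]$ then gives
\[
\sum_{[a] \in \U_{k_1,\dots,k_n}} \!\!\E[\bar T_a] \;=\; \sum_{\sigma} \prod_{i=1}^{n/2} \sum_{[a_i] \in \U_{k_{\sigma(2i-1)},k_{\sigma(2i)}}} \!\!\!\E[\bar T_{a_i}] \;=\; \sum_{\sigma} \prod_{i=1}^{n/2} A(k_{\sigma(2i-1)}, k_{\sigma(2i)}).
\]

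The principal obstacle is the bookkeeping in the enumeration step: one must verify that the matching $\sigma$ extracted by Lemma~\ref{lemma:CLTword-2} is a genuine invariant of the equivalence class $[a]$ (so inequivalent sentences cannot produce the same $(\sigma, ([a_i]))$), and that the convention ``$\sigma$: perfect matching'' on the right-hand side corresponds to the canonical choice $\sigma(1)=1$, $\sigma(2i+1) = \min\bigl(\{1,\dots,n\}\setminus\{\sigma(1),\dots,\sigma(2i)\}\bigr)$ used in the proof of Lemma~\ref{lemma:CLTword-2}, so that each unordered pairing is counted exactly once and no spurious $2^{n/2}(n/2)!$ prefactor appears. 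Once this alignment is in place, independence and the bijection yield the claimed identity.
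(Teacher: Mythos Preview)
Your proposal is correct and follows exactly the approach intended by the paper, which simply records the lemma as ``a direct consequence of Lemma~\ref{lemma:CLTword-2}''. You have supplied precisely the two ingredients that make that one-line proof work: the factorization $\E[\bar T_a]=\prod_i \E[\bar T_{a_i}]$ coming from the edge-disjointness in Lemma~\ref{lemma:CLTword-2}(ii)(b), and the bijection between equivalence classes of CLT sentences and data $(\sigma,([a_1],\dots,[a_{n/2}]))$ induced by the uniqueness in Lemma~\ref{lemma:CLTword-2}(i) together with the canonical choice of $\sigma$.
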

\begin{proof}
	It is a direct consequence of Lemma~\ref{lemma:CLTword-2}.
\end{proof}

\begin{thm}\label{thm:joint-distribution-of-Y}
The joint distribution of $\{Y_{N,k}\}_{k = 2}^K$ converges to that of $\{\eta_k\}_{k = 2}^K$ as $N$ tends to infinity for any fixed $K \ge 2$.
\end{thm}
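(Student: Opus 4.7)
The plan is to prove the joint convergence by the method of (mixed) moments. Since the target random vector $(\eta_2,\dots,\eta_K)$ is centered jointly Gaussian, it is uniquely determined by its collection of mixed moments, so it is enough to establish, for every $n\ge 1$ and every choice of indices $k_1,\dots,k_n\in\{2,\dots,K\}$, the convergence
\[
    \E\Bigl[\prod_{i=1}^n Y_{N,k_i}\Bigr]\;\longrightarrow\;\E\Bigl[\prod_{i=1}^n \eta_{k_i}\Bigr]\qquad(N\to\infty).
\]
All the combinatorial work for this has in fact been done in Section~2 and in Lemma~\ref{lem:CLT-sentence}; only the final matching with the Gaussian moments needs to be spelled out.

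First I would treat the odd $n$ case. By Lemma~\ref{lem:CLT-sentence}, the limit of the left-hand side is $\sum_{a\in\U_{k_1,\dots,k_n}}\E[\bar T_a]$. Lemma~\ref{lemma:CLTword-2}(ii) says any CLT sentence whose components lie in $\U$ must have an \emph{even} number of components, so $\U_{k_1,\dots,k_n}=\emptyset$ when $n$ is odd and the limit is $0$. This matches the right-hand side because odd mixed moments of a centered Gaussian vector vanish by Wick's (Isserlis') formula.

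Next I would handle even $n$. By formula \eqref{limit-of-product}, the limit of $\E[\prod_i Y_{N,k_i}]$ equals
\[
    \sum_{\substack{\sigma\in S_n\\ \sigma\text{ perfect matching}}}\;\prod_{i=1}^{n/2} A\bigl(k_{\sigma(2i-1)},k_{\sigma(2i)}\bigr).
\]
Since $\{\eta_k\}$ were defined precisely so that $\E[\eta_k\eta_l]=A(k,l)$, Wick's theorem applied to the centered Gaussian vector $(\eta_k)$ gives exactly this same expression for $\E[\prod_i\eta_{k_i}]$. Hence the mixed moments of the two sides agree in the limit.

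To pass from moment convergence to weak convergence of random vectors, I would invoke the fact that multivariate centered Gaussian distributions are determined by their moments (Carleman's condition is trivially met); equivalently, one can apply the Cram\'er--Wold device by observing that any linear combination $\sum_{k} c_k Y_{N,k}$ has all of its one-dimensional moments converging to those of the Gaussian $\sum_k c_k \eta_k$, and then invoke the standard one-dimensional method of moments. I do not expect any real obstacle here: the heavy lifting is already packaged in Lemmas~\ref{lem:CLT-sentence} and the identity \eqref{limit-of-product}, which were designed to produce exactly the Wick pairing structure. The only subtlety worth a sentence in the write-up is the determinacy/Cram\'er--Wold step justifying that the moment convergence upgrades to joint weak convergence.
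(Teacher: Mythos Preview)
Your proposal is correct and follows essentially the same route as the paper: both arguments use Lemma~\ref{lem:CLT-sentence} together with identity~\eqref{limit-of-product} to identify the limiting mixed moments with the Wick formula for $\E[\prod_i\eta_{k_i}]$, and then conclude by the moment characterization of the Gaussian. Your treatment is slightly more explicit in the odd-$n$ case (invoking Lemma~\ref{lemma:CLTword-2}(ii) directly to see $\U_{k_1,\dots,k_n}=\emptyset$) and in the determinacy/Cram\'er--Wold step, but there is no substantive difference in strategy.
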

\begin{proof}
	The left hand side of \eqref{limit-of-product} is exactly the Wick formula for the expectation 
\[
	\E\left[\prod_{i = 1}^n \eta_{k_i}\right].
\] 
Thus, for any even number $n$, and for any $k_1, \dots, k_n \ge 2$, 
\[
	\lim_{N \to \infty} \E\left[\prod_{i = 1}^n Y_{N,k_i} \right] = \E\left[\prod_{i = 1}^n \eta_{k_i}\right].
\]
This also holds if $n$ is odd, in which both sides are zero. Therefore, the joint distribution of  $\{Y_{N,k}\}_{k = 2}^K$ converges to that of $\{\eta_k\}_{k = 2}^K$  because Gaussian distributions are characterized by their moments.  
\end{proof}

\subsection{General case}

Let 
\[
	Z_{N,k} = \frac{1}{N^{\frac{k - 1}{2}}} \sum_{w \in \V_{k}^{(N)}} (T_{w} - \E[T_{w}]) = \frac{1}{N^{\frac{k - 1}{2}}} \sum_{w \in \V_{k}^{(N)}} \bar T_w.
\]
It is clear that $\E[Z_{N, k}] = 0$. We consider 
\[
	\E[Z_{N,k}^2] = \frac{1}{N^{k-1}} \sum_{w_1, w_2 \in \V_{k}^{(N)}}\E[ \bar T_{(w_1, w_2)}].
\]

Recall that $(w_1, w_2)$ is a weak CLT pair if 
\begin{itemize}
	\item[(P1)] $N_e^a \ge 2$, for all $e \in E_a$, where $a = (w_1, w_2)$;
	\item[(P2)] $E_{w_1} \cap E_{w_2} \neq \emptyset$.
\end{itemize}
For a word $w \in \V$, let $\check w \in \U$ be the word constructed from $w$ by deleting every adjacent same letter. Then the graph of $\check w$ is obtained from that of $w$ by removing all self edges. The following lemma refines Lemma~\ref{lem:weak-CLT-pair}

\begin{lem}\label{lem:CLT-U-pair}
	Let $w_1, w_2 \in \V_k$ be a weak CLT pair. Then
\begin{itemize}
	\item[\rm (i)]	$\wt((w_1, w_2)) \le k$, if $k$ is odd;
	\item[\rm (ii)] $\wt((w_1, w_2)) \le k - 1$, if $k$ is even.
\end{itemize}
\end{lem}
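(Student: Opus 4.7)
The plan is to bound $\wt(a)$, where $a=(w_1,w_2)$, by analyzing the subgraph of $G_a$ spanned by the connecting edges $E_a^c$, and then invoking the parity principle (the cited lemma on closed walks on trees) exactly as in Lemma~\ref{lem:weak-CLT-pair}.

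First I would set up notation and extract the two elementary estimates that drive the argument. Let $m_i$ denote the number of self-edge traversals of $w_i$, i.e.\ the number of indices $j$ with $s_j^i = s_{j+1}^i$; since $w_i \in \V_k$ we have $m_i \ge 1$. The walk $w_i$ performs $k$ edge-traversals, of which exactly $k-m_i$ are on connecting edges. Since every vertex of $V_a=\supp(a)$ is reachable from the base letter $1$ along connecting edges (just delete adjacent duplicates in the prefix of $w_i$ that first reaches that vertex), the subgraph of $G_a$ with edge set $E_a^c$ is connected on $V_a$, and so
\[
	\wt(a) \le 1 + \#E_a^c,
\]
with equality if and only if $E_a^c$ spans a tree on $V_a$. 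Condition (P1) applied to connecting edges gives $2\#E_a^c \le (k-m_1) + (k-m_2)$, i.e.\ $\#E_a^c \le k - (m_1+m_2)/2$.

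Next I would split into two cases according to whether the connecting-edge subgraph contains a cycle. If it does, then $\wt(a) \le \#E_a^c \le k - (m_1+m_2)/2 \le k-1$, using only $m_1+m_2 \ge 2$; this is already stronger than both (i) and (ii). Otherwise $E_a^c$ spans a tree $T$ on $V_a$, and I would introduce the reduced word $\check w_i$ obtained from $w_i$ by deleting every adjacent same letter. Then $\check w_i$ is a closed walk on $T$ of edge-length $k-m_i$; by the parity principle this length is even, i.e.\ $m_i \equiv k \pmod{2}$. Combined with $m_i \ge 1$ this forces $m_i \ge 1$ when $k$ is odd and $m_i \ge 2$ when $k$ is even. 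Substituting into $\wt(a) \le 1 + k - (m_1+m_2)/2$ yields $\wt(a) \le k$ in the odd case and $\wt(a) \le k-1$ in the even case, finishing the proof.

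The only delicate point is verifying that the connecting-edge subgraph is indeed connected on $V_a$ (so that $\wt(a) \le 1+\#E_a^c$ is valid) and that $\check w_i$ is a genuine closed walk on $T$ in the tree case. Both are immediate once one observes that $\check w_i$ remains closed at $1$ and uses only connecting edges. Notably, condition (P2) on the pair $(w_1,w_2)$ plays no role here; the argument uses only (P1) together with the assumption $w_i \in \V_k$.
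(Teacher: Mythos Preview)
Your proof is correct and follows essentially the same route as the paper: both arguments pass to the reduced words $\check w_i$ (your connecting-edge subgraph is exactly $G_{\check a}$), use that this graph is connected to get $\wt(a)\le 1+\#E_a^c$, bound $\#E_a^c$ via condition (P1), and invoke the parity principle in the tree case. The only cosmetic difference is that the paper first proves $\wt(a)\le k$ uniformly and then rules out equality for even $k$ by contradiction, whereas you split into the tree/non-tree cases up front and track $m_i$ explicitly; your observation that (P2) is not used is also correct.
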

\begin{proof}
The proof is similar to that of Lemma~\ref{lem:weak-CLT-pair}. Let $\check w_1, \check w_2 \in \U$ be the words obtained from $w_1, w_2$ by deleting every adjacent same letter. Let $\check a = (\check w_1, \check w_2)$. Then $N_e^{\check a} \ge 2$ for all $e \in E_{\check a}$. Let $G_{\check a} = (V_{\check a}, E_{\check a})$ be the graph of $\check a$. Note that $G_{\check a}$ is connected because both $\check w_1$ and $\check w_2$ are words started from $1$. Note also that $\wt (a) = \wt(\check a)$. Since $N_e^{\check a} \ge 2$ for all $e \in \E_{\check a}$, it follows that 
\[
	\# E_{\check a} \le \frac{1}{2}(l(\check w_1) - 1 + l(\check w_2) - 1) \le \frac12 (l(w_1) -2 + l(w_2) -2) = k - 1.
\]
The last inequality holds because $l(\check w_i) \le l(w_i) - 1 = k, i = 1,2$. Thus 
\[
	\wt(a) = \wt(\check a) \le 1 + \# E_{\check a}  \le k.
\]

Next, we show that $\wt(a) = k$ does not hold if $k$ is even. Indeed, assume that $\wt(a) = k$. It follows that $\wt(a) = \wt(\check a) = 1 + \# E_{\check a}$, and hence the graph $G_{\check a}$ is a tree. In this case, it also implies that $l(\check w_i) = \l(w_i) - 1 = k, i = 1,2$. Thus $\check w_i$ is a closed walk of length $k$, which is even, on the tree $G_{\check a}$, which is impossible. The lemma is proved. 
\end{proof}

Let $\V_{k,k;t}$ denote a set of representatives for equivalence classes of weak CLT pairs $(w_1, w_2)$, where $w_1, w_2 \in \V$ are $t$-words of length $k + 1$. Then similarly to Lemma~\ref{lem:weak-CLT-pair}, we can show that 
\begin{equation}\label{variance-of-Z}
	\lim_{N \to \infty} \E[Z_{N,k}^2] = \sum_{(w_1, w_2) \in \V_{k,k;k}} \E[\bar T_{(w_1, w_2)}],
\end{equation}
which is zero if $k$ is even.

For odd $k$, let $\A_k$ denote a set of representatives for equivalent classes of words $w$ of length $k + 1$, for which $N_{(1,1)}^w = 1$ and $\check w$ is a Wigner word. Let $a_k$ be the cardinality of $\A_k$.

\begin{lem}
	Let $k \ge 3$ be an odd number. Let $(w_1, w_2) \in \V_{k,k;k}$. Then the following hold.
	\begin{itemize}
		\item[\rm (i)] $w_i$ is equivalent to some element of $\A_k, i = 1,2$.
		\item[\rm (ii)] $\supp(w_1) \cap \supp(w_2) = \{1\}$.
		\item[\rm (iii)] $\E[\bar T_{w_1, w_2}] = \E[\xi_{11}^2]$.
		\item[\rm (iv)] 
			\begin{equation}\label{variance-of-Z-2}
				\sum_{(w_1, w_2) \in \V_{k,k;k}} \E[\bar T_{(w_1, w_2)}] = a_k^2 \E[\xi_{11}^2].
			\end{equation}
	\end{itemize}
\end{lem}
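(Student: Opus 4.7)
The plan is to sharpen the argument of Lemma~\ref{lem:CLT-U-pair} by extracting its equality cases. Assume $(w_1,w_2) \in \V_{k,k;k}$ with $k$ odd, and let $a = (w_1,w_2)$. The chain
\[
k = \wt(a) \le 1 + \#E_{\check a} \le 1 + \tfrac{1}{2}(\l(\check w_1) + \l(\check w_2) - 2) \le k
\]
must collapse, so $G_{\check a}$ is a tree with $k - 1$ edges, each edge of $\check a$ is traversed exactly twice by the pair, and $\l(\check w_i) = k$. The last equality forces each $w_i$ to contain exactly one self-edge, located at some vertex $v_i$. Applying the parity principle to the closed walk $\check w_i$ on the tree $G_{\check a}$ shows $N_e^{\check w_i}$ is even; combined with $N_e^{\check w_1} + N_e^{\check w_2} = 2$, each edge is traversed twice by one $\check w_i$ and not at all by the other, so $E_{\check w_1} \cap E_{\check w_2} = \emptyset$. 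Consequently $\check w_i$ traverses every edge of its own subgraph $G_{\check w_i}$ exactly twice and $\wt(\check w_i) = (k+1)/2$, i.e.\ $\check w_i$ is a Wigner word of length $k$.

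For (ii): the subtrees $G_{\check w_1}$ and $G_{\check w_2}$ share the root $1$ and have disjoint edge sets; if they met at a second vertex $v$, the unique tree-path from $1$ to $v$ in $G_{\check a}$ would have to lie both in $E_{\check w_1}$ and in $E_{\check w_2}$, a contradiction. Hence $\supp(w_1) \cap \supp(w_2) = \{1\}$. Returning to (i), the weak CLT property (P2) forces $E_{w_1} \cap E_{w_2} \ne \emptyset$; since the non-self edges are already disjoint, the shared edge must be a self-edge, so $v_1 = v_2 \in \supp(w_1) \cap \supp(w_2) = \{1\}$ and $N_{(1,1)}^{w_i} = 1$. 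Combined with $\check w_i$ being a Wigner word, this places $w_i$ in an equivalence class belonging to $\A_k$.

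For (iii), the formula $T_{w_i} = \xi_{11} \prod_{e \in E_{\check w_i}} \xi_e^2$ together with $\E[\xi_{11}] = 0$ gives $\E[T_{w_i}] = 0$, so $\bar T_{w_i} = T_{w_i}$. The off-diagonal variables appearing in $T_{w_1}$ and in $T_{w_2}$ correspond to disjoint pairs of indices (by (ii)) and are independent of each other and of $\xi_{11}$, yielding $\E[\bar T_{(w_1,w_2)}] = \E[\xi_{11}^2] \prod_{e} \E[\xi_e^2] = \E[\xi_{11}^2]$. For (iv), by (i) and (ii) every class in $\V_{k,k;k}$ is represented by a ``gluing at $1$'' of two words drawn from $\A_k$; a sentence-equivalence must fix the starting vertex $1$ and then splits as independent bijections on $\supp(w_i) \setminus \{1\}$, while any ordered pair in $\A_k \times \A_k$ is realised by choosing disjoint relabellings. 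Thus $|\V_{k,k;k}| = a_k^2$, and summing the common value $\E[\xi_{11}^2]$ gives \eqref{variance-of-Z-2}.

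The main obstacle is the opening structural step: one must extract from the single equality $\wt(a) = k$ both that $G_{\check a}$ is a tree on which the two walks are edge-disjoint and that each $\check w_i$ is itself a genuine Wigner word, since these two facts jointly pin the unique self-edge of each $w_i$ to the root $1$ and enable the independence-based computation of the expectation.
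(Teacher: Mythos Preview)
Your proof is correct and follows essentially the same approach as the paper: both extract the equality case from the estimate in Lemma~\ref{lem:CLT-U-pair} to conclude that $G_{\check a}$ is a tree on which each $\check w_i$ is a Wigner word with edge sets disjoint from one another, then use (P2) to force the common edge to be $(1,1)$. Your argument is in fact more explicit than the paper's, which simply asserts that ``$\check w_i$ visits each of its edges exactly twice'' and that (iii)--(iv) are ``direct consequences''; you supply the parity-plus-counting step, the subtree argument for (ii), the independence computation for (iii), and the counting for (iv) that the paper leaves to the reader.
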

\begin{proof}
	Let $\check a$ be as in the proof of Lemma~\ref{lem:CLT-U-pair}. Recall that, in this case, both $\check w_1$ and $\check w_2$ are walks of length $k$ on the tree $G_{\check a}$ and $\check w_i$ visit each of it edges exactly twice, $i = 1,2$. Thus $\check w_1$ and $\check w_2$ are Wigner words. Moreover, $N_{e}^{\check a} = 2$ for all $e \in \check a$, which implies that $\supp(\check w_1) \cap \supp(\check w_2) = \{1\}$. Now, it follows from the condition (P2), $E_{w_1} \cap E_{w_2} \neq \emptyset$, that $(1,1)$ must be a common edge of $w_1$ and $w_2$. Therefore, we obtain (i) and also (ii).

(iii) and (iv) are direct consequences of (i) and (ii). 
\end{proof}

\begin{lem}\label{lem:limit-of-Zk}
	Let $k$ be an odd number. Then the following hold. 
	\begin{itemize}
		\item[\rm(i)]
			\[
				\lim_{N \to \infty} \E[\xi_{11} Z_{N,k}] = a_k \E[\xi_{11}^2].
			\]
		\item[\rm(ii)]
			\[
				\lim_{N \to \infty} \E[(Z_{N,k} - a_k \xi_{11})^2] = 0.
			\]
	\end{itemize}
\end{lem}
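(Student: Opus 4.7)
The plan is to run the same moment-method reduction used throughout the section, but keyed on the extra $\xi_{11}$ factor. For part~(i), I would first write
\[
	\E[\xi_{11} Z_{N,k}] = \frac{1}{N^{(k-1)/2}} \sum_{w \in \V_k^{(N)}} \E[\xi_{11} \bar T_w] = \frac{1}{N^{(k-1)/2}} \sum_{w \in \V_k^{(N)}} \E[\xi_{11} T_w],
\]
the second equality using $\E[\xi_{11}] = 0$. Independence and mean zero of the entries then force $\E[\xi_{11} T_w]$ to vanish unless $(1,1) \in E_w$ and $N_e^w \ge 2$ for every other $e \in E_w$; in that case the expectation factorizes and, when $N_{(1,1)}^w = 1$ and all other $N_e^w = 2$, it equals $\E[\xi_{11}^2]$ (using $\E[\xi_{12}^2] = 1$).

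The combinatorial core is a sharp weight bound. Because $(1,1)$ is a self-loop, removing all $s \ge 1$ self-loops from $G_w$ leaves a connected subgraph on $\wt(w)$ vertices (the walk transitions between distinct vertices only along non-self edges), so $\wt(w) \le 1 + \#E_w - s$. Combining with $k = N_{(1,1)}^w + \sum_{e \ne (1,1)} N_e^w \ge 1 + 2(\#E_w - 1)$ yields $\wt(w) \le (k+1)/2$, and equality forces $s=1$ (so $(1,1)$ is the unique self-loop), $N_{(1,1)}^w = 1$, $N_e^w = 2$ for every other edge, and the subgraph obtained by deleting the self-loop is a tree. Equivalently, $\check w$ is a Wigner word of length $k$, which is precisely the description of an equivalence class in $\A_k$. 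Conversely each representative in $\A_k$ satisfies all these conditions and admits $(N-1)(N-2)\cdots(N-(k-1)/2)$ equivalent $N$-words; after dividing by $N^{(k-1)/2}$ the counting factor tends to $1$, while all lower-weight classes contribute $O(N^{-1})$. Summing yields $a_k \E[\xi_{11}^2]$.

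Part~(ii) is then a short consequence of (i). Expanding
\[
	\E[(Z_{N,k} - a_k \xi_{11})^2] = \E[Z_{N,k}^2] - 2 a_k \E[\xi_{11} Z_{N,k}] + a_k^2 \E[\xi_{11}^2],
\]
and substituting (i) together with \eqref{variance-of-Z} and \eqref{variance-of-Z-2}, which together give $\lim_N \E[Z_{N,k}^2] = a_k^2 \E[\xi_{11}^2]$, the three terms cancel. The main obstacle is the weight bound: one must exploit that the forced edge $(1,1)$ is a self-loop and therefore does not contribute to the spanning dimension of $G_w$, which is what brings the bound from the naive $(k+3)/2$ down to the correct $(k+1)/2$ and pins the extremal classes down to exactly $\A_k$.
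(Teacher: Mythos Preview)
Your proposal is correct and follows essentially the same route as the paper: the same vanishing criterion on $w$, the same sharp weight bound $\wt(w)\le (k+1)/2$ with equality characterizing $\A_k$, and the same expansion of the square in~(ii) using \eqref{variance-of-Z} and \eqref{variance-of-Z-2}. The only cosmetic difference is that the paper phrases the weight bound via the simplified word $\check w$ (a weak Wigner word of length $\le k$), whereas you count edges directly after removing the self-loops; these are the same argument.
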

\begin{proof}(i) 
	It follows from the definition of $Z_{N,k}$ that
	\[
		\E[\xi_{11} Z_{N,k}] = \frac{1}{N^{\frac{k - 1}{2}}} \sum_{w \in \V_k^{(N)}} \E[\xi_{11} \bar T_{w}].
	\]
	It is clear that $\E[\xi_{11} \bar T_{w}] = 0$ unless a word $w$ satisfies the following conditions
	\begin{itemize}
		\item $N_{(1,1)}^w \ge 1$;
		\item $N_{e}^w \ge 2$ for all $e \in E_{w}\setminus \{(1,1)\}$.
	\end{itemize}

Assume that a word $w$ satisfies the above conditions. Let $\check w$ be the simplified word of $w$. Then $\check w$ is a word of length at most $k$, which visits each edge at least twice. Thus, 
\[
	\wt(w) = \wt(\check w) \le \# E_{\check a} + 1  \le \frac{k - 1}{2}+ 1 = \frac{k + 1}{2}.
\] 
The equality $\wt(w) = (k + 1)/2$ holds if $\check w$ is a Wigner word of length $k$, or equivalently, if $w$ is equivalent to some word in $\A_k$. 

Now by a standard argument as in the proof of Lemma~\ref{lemma:convergence-of-moments}
\[
	\lim_{N \to \infty} \E[\xi_{11} Z_{N,k}] = \sum_{w \in \A_k} \E[\xi_{11} \bar T_{w}] = a_k \E[\xi_{11}^2].
\]

(ii) follows from (i), the limit \eqref{variance-of-Z} and the expression \eqref{variance-of-Z-2}. The lemma is proved.
\end{proof}

The following results are direct consequences of the limit \eqref{variance-of-Z} with even $k$ and Lemma~\ref{lem:limit-of-Zk}(iii).
\begin{lem}\label{lem:limit-theorem-for-Z}
\begin{itemize}
\item[\rm (i)] For even $k$, $Z_{N,k}$ converges in probability to zero.
\item[\rm (ii)] For odd $k$, $Z_{N,k}$ converges in probability to $a_k \xi_{11}$.
\end{itemize}
\end{lem}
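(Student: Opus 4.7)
The plan is to derive both parts as immediate $L^2$-convergence statements from results already proved, then invoke the standard fact that convergence in $L^2$ implies convergence in probability (via Chebyshev's inequality applied to $\E[|Z_{N,k} - L|^2]$, where $L$ is the candidate limit).

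First, I would handle part (i). Since $\E[Z_{N,k}] = 0$, it suffices to show $\E[Z_{N,k}^2] \to 0$ when $k$ is even. This is already recorded in \eqref{variance-of-Z}: the limit of $\E[Z_{N,k}^2]$ equals the sum $\sum_{(w_1,w_2) \in \V_{k,k;k}} \E[\bar T_{(w_1,w_2)}]$. For even $k$, however, Lemma~\ref{lem:CLT-U-pair}(ii) forces $\wt((w_1,w_2)) \le k-1$ for every weak CLT pair $(w_1,w_2)$ with $w_1, w_2 \in \V_k$, so the index set $\V_{k,k;k}$ is empty and the limit is zero. Thus $Z_{N,k} \to 0$ in $L^2$, and hence in probability.

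Next, I would handle part (ii). For odd $k$, Lemma~\ref{lem:limit-of-Zk}(ii) gives directly
\[
    \lim_{N \to \infty} \E[(Z_{N,k} - a_k \xi_{11})^2] = 0,
\]
which is $L^2$-convergence of $Z_{N,k}$ to $a_k \xi_{11}$, hence convergence in probability as well. Nothing further is required beyond quoting this limit.

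There is no real obstacle here: both assertions are essentially repackagings of $L^2$ estimates established in the preceding lemmas. The only subtle point worth flagging in the write-up is the justification for part (i), namely that the emptiness of $\V_{k,k;k}$ for even $k$ is what makes the right-hand side of \eqref{variance-of-Z} vanish; this should be noted explicitly so that the reader does not have to re-derive it from Lemma~\ref{lem:CLT-U-pair}.
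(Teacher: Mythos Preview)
Your proposal is correct and matches the paper's own justification: the paper states that the lemma is a direct consequence of \eqref{variance-of-Z} (already noted there to vanish for even $k$ via Lemma~\ref{lem:CLT-U-pair}) together with Lemma~\ref{lem:limit-of-Zk}(ii). Your write-up simply makes explicit the $L^2$-to-probability step and the reason $\V_{k,k;k}$ is empty for even $k$, which is exactly the intended argument.
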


\begin{thm}
	Let $\zeta$ be a random variable which has the same distribution as $\xi_{11}$ and is independent of $\{\eta_k\}_{k \ge 2}$. Let $\bar S_{N,k} = \sqrt{N} (X_{N}^k(1,1) - \E[X_{N}^k(1,1)])$. Then the following holds.
\begin{itemize}
	\item[\rm (i)] For even $k$, 
		\[
			\bar S_{N,k}  \dto \eta_k \text{ as } N \to \infty.
		\]
	\item[\rm (ii)] For odd $k \ge 3$,
		\[
			\bar S_{N,k}  \dto a_k \zeta + \eta_k \text{ as } N \to \infty.
		\]
	\item[\rm (iii)] For fixed $K$, the joint distribution of $(\bar S_{N, 1}, \bar S_{N, 2}, \dots,\bar S_{N,K})$ converges to that of $(\zeta, \eta_2, a_3 \zeta + \eta_3, \dots)$.
	
\end{itemize}
\end{thm}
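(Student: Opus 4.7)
The plan is to split $\bar S_{N,k}=Y_{N,k}+Z_{N,k}$ according to the decomposition $\W_k^{(N)}=\U_k^{(N)}\cup\V_k^{(N)}$, then invoke the Gaussian limit of $(Y_{N,k})_{k\ge 2}$ from Theorem~\ref{thm:joint-distribution-of-Y} together with the degenerate limits of $(Z_{N,k})$ from Lemma~\ref{lem:limit-theorem-for-Z}, and glue them together by Slutsky's theorem. The case $k=1$ needs no machinery: $\W_1^{(N)}=\{(1,1)\}\subset\V_1^{(N)}$, so $\bar S_{N,1}=\xi_{11}$ exactly, already distributed as $\zeta$.

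The key structural observation is that every $w\in\U$ satisfies $E_w^s=\emptyset$, so $T_w$ is a polynomial in the off-diagonal entries $\{\xi_{ij}\}_{i<j}$ alone. Hence $(Y_{N,2},\dots,Y_{N,K})$ is exactly independent of the diagonal family $\{\xi_{ii}\}_i$, and in particular of $\xi_{11}$, for every $N$. Combining this with Theorem~\ref{thm:joint-distribution-of-Y} and the fact that the law of $\xi_{11}$ does not depend on $N$ yields
\[
(\xi_{11},Y_{N,2},\dots,Y_{N,K})\dto(\zeta,\eta_2,\dots,\eta_K),
\]
where $\zeta$ is an independent copy of $\xi_{11}$ that is independent of $\{\eta_k\}_{k=2}^K$.

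To finish, apply Lemma~\ref{lem:limit-theorem-for-Z}: $Z_{N,k}\to 0$ in probability for even $k\ge 2$ and $Z_{N,k}-a_k\xi_{11}\to 0$ in probability for odd $k\ge 3$. Define the linear map
\[
A\colon(x,y_2,\dots,y_K)\mapsto(x,y_2,a_3x+y_3,y_4,a_5x+y_5,\dots),
\]
so that
\[
(\bar S_{N,1},\dots,\bar S_{N,K})=A(\xi_{11},Y_{N,2},\dots,Y_{N,K})+r_N,
\]
with $r_N=(0,Z_{N,2},Z_{N,3}-a_3\xi_{11},Z_{N,4},\dots)$ converging to zero in probability coordinate by coordinate. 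The continuous mapping theorem applied to $A$ and Slutsky's theorem then deliver (iii), and (i) and (ii) are just its marginals.

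The only delicate point is the upgrade from marginal to joint convergence in the second paragraph: one must promote the exact independence of $\xi_{11}$ from $(Y_{N,2},\dots,Y_{N,K})$ at every finite $N$ to independence of $\zeta$ from $(\eta_2,\dots,\eta_K)$ in the limit. Without this the joint law in (iii) is not well-defined. It is handled routinely by factoring the characteristic function, but it is the only place in the argument where the structural independence of the diagonal and off-diagonal entries of $X_N$ is genuinely used.
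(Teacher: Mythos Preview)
Your proposal is correct and follows essentially the same route as the paper: both split $\bar S_{N,k}=Y_{N,k}+Z_{N,k}$, use Theorem~\ref{thm:joint-distribution-of-Y} for the Gaussian limit of the $Y$-part, Lemma~\ref{lem:limit-theorem-for-Z} for the degeneration $Z_{N,k}-a_k\xi_{11}\to 0$ in probability, and the exact independence of $(Y_{N,k})_{k\ge 2}$ from $\xi_{11}$ (since $\U$-words have no self-edges) to obtain the joint limit, then finish with Slutsky. The only cosmetic difference is that the paper packages the last step via the Cram\'er--Wold device (arbitrary real linear combinations $\sum_k\alpha_k\bar S_{N,k}$), whereas you apply the continuous mapping theorem to a fixed linear map $A$; these are equivalent formulations of the same argument.
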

\begin{proof}
We only need to prove (iii). Let $a_1 = 1, Y_{N,1} = 0$ and $Z_{N,1} = \xi_{11}$.
	For even $k$, let $a_k = 0$. Note that  
\[
		\bar S_{N,k} = Y_{N,k} + Z_{N,k} = Y_{N,k} + a_k \xi_{11} + (Z_{N,k} - a_k \xi_{11}).
\]

For any real numbers $\{\alpha_k\}_{k = 1}^K$, we consider 
\begin{align*}
	\sum_{k = 1}^K {\alpha_k \bar S_{N,k}} &= \sum_{k = 2}^K \alpha_k Y_{N,k} + (\sum_{k = 1}^K \alpha_k a_k) \xi_{11} + \sum_{k= 2}^K \alpha_k (Z_{N,k} - a_k \xi_{11}) \\
	&=: S_1 + S_2 + S_3.
\end{align*}

As $N \to \infty$, $S_1 $ converges in distribution to $\sum_{k = 2}^K \alpha_k \eta_k$ by Theorem~\ref{thm:joint-distribution-of-Y}. Since $S_1$ is independent of $\xi_{11}$, it follows that $S_1 + S_2$ converges in distribution to $\sum_{k = 2}^K \alpha_k \eta_k + (\sum_{k = 1}^K \alpha_k a_k) \zeta$ as $N$ tends to infinity. Finally, $S_3$ converges in probability to zero by Lemma~\ref{lem:limit-theorem-for-Z}. Therefore, 
\[
	\sum_{k = 1}^K {\alpha_k \bar S_{N,k}} \dto \sum_{k = 1}^K \alpha_k (a_k \zeta + \eta_k) \text{ as } N \to \infty.
\]
The theorem is proved.
\end{proof}

{}

\hfill
\begin{tabular}{l}
Trinh Khanh Duy \\
Institute of Mathematics for Industry \\
Kyushu University\\
Fukuoka 819-0395, Japan \\
e-mail: trinh@imi.kyushu-u.ac.jp; duytkvn@gmail.com \\
\end{tabular}

\end{document}